\DeclareMathAlphabet{\mathpzc}{OT1}{pzc}{m}{it}
\newcommand{\TheTitle}{Finite element approximation to linear, second order, parabolic problems with $L^1$ data}
\newcommand{\ShortTitle}{Renormalized solutions}
\newcommand{\TheAuthors}{A.J.~Salgado and G.R.~Barrenechea}
\headers{\ShortTitle}{\TheAuthors}
\title{\TheTitle}
\author{Gabriel R.~Barrenechea\thanks{Department of Mathematics and Statistics, University of Strathclyde, 26 Richmond Street, Glasgow G1 1XH, Scotland.
  (\email{gabriel.barrenechea@strath.ac.uk}, \url{https://www.gabrielbarrenechea.com/})}
\and
  Abner J.~Salgado\thanks{Department of Mathematics, University of Tennessee, Knoxville, TN 37996, USA.
    (\email{asalgad1@utk.edu}, \url{https://math.utk.edu/people/abner-salgado/})}
}
\begin{document}

\maketitle

\begin{abstract}
We consider the approximation to the solution of the initial boundary value problem for the heat equation with right hand side and initial condition that merely belong to $L^1$. Due to the low integrability of the data, to guarantee well-posedness, we must understand solutions in the renormalized sense. We prove that, under an inverse CFL condition, the solution of the standard implicit Euler scheme with mass lumping converges, in $L^\infty(0,T;L^1(\Omega))$ and $L^q(0,T;W^{1,q}_0(\Omega))$ ($q<\tfrac{d+2}{d+1}$), to the renormalized solution of the problem.
\end{abstract}

\begin{keywords}
Parabolic problem; $L^1$ data; Renormalized solution; Convergence.
\end{keywords}

\begin{MSCcodes}
  65N12;    %Stability and convergence of numerical methods for boundary value problems involving PDEs
  65N30;    %Finite element, Rayleigh-Ritz and Galerkin methods for boundary value problems involving PDEs
  35A35;    %Theoretical approximation in context of PDEs For numerical analysis
  35D99;    %None of the above, but in this section
  35K20.    %Initial-boundary value problems for second-order parabolic equations
\end{MSCcodes}

\section{Introduction}
\label{sec:Into}

The purpose of this work is to study the convergence properties of a standard finite element discretization to the following initial boundary value problem
\begin{equation}
\label{eq:TheEqn}
  \begin{dcases}
    \partial_t u - \LAP u = f, & \text{ in } (0,T) \times \Omega, \\
    u = 0, & \text{ on } \partial\Omega \times (0,T), \\
    u|_{t=0} = u_0, & \text{ in } \Omega.
  \end{dcases}
\end{equation}
Here $T>0$ is a positive final time and, for $d \geq 1$, $\Omega \subset \Reald$ is a bounded polytope with Lipschitz boundary. The main source of difficulty and originality in our work comes from the data. Namely, we merely assume that the initial condition satisfies $u_0 \in L^1(\Omega)$, and the right hand side is such that $f \in L^1(Q_T)$; see \cref{sec:Notation} for notation.

The limited integrability of the initial data and right hand side prevent \cref{eq:TheEqn} to be understood in the weak setting where, according to \cite[Chapter XVIII]{MR1156075}, one must assume that, at least, $u_0 \in \Ldeux$ and $f \in L^1( 0, T; \Ldeux ) + L^2( 0, T; \Hmun )$. Nevertheless problem \cref{eq:TheEqn} with data in $L^1$ appears, for instance, in the study of the thermistor problem \cite{MR1278895,MR2178229},  or more generally in the modelling of induction heating \cite{CT97}, some Vlasov-Poisson systems \cite{BBC16}, and the modeling of turbulent flows (see, e.g., \cite{GLLMT03}, \cite[Chapter~7]{Chac-Lewa-Book}, and the references therein).
 In order to obtain a satisfactory theory, the notion of renormalized solutions was developed. We refer the reader to \cite{MR1354907,MR1760541} for definitions and results in the elliptic case. This notion was introduced and developed in \cite{MR1489429,MR1876648,MR1453181}. Existence, uniqueness, and stability of solutions was established; as well as, for linear problems, its equivalence with other notions of solution, like that of entropy solutions \cite{MR1894844,MR1900892,MR1436364}. Below, in \cref{sub:DefofRenormalized}, we give a precise definition of renormalized solutions, as well as a summary of their properties.

The nonstandard notion of solution that is needed for a successful PDE theory forces either the development of new numerical schemes, or the reevaluation of existing ones. In this regard we mention, for instance, \cite{MR4353558} which,  after reformulating the PDE as a nonlinear problem using a change of variables, develops a nonlinear finite element method for a linear elliptic problem with $L^1$ data. Finite volume schemes for elliptic \cite{MR3589343,MR4769158} and parabolic \cite{MR2904585} problems with $L^1$ data have been developed. It is shown that these methods converge to a distributional solution. To our knowledge, the only references that deal with renormalized solutions in a numerical setting are \cite{MR2266830,MR3615888}. These show, under certain mesh assumptions, that a standard and a nonlinearly stabilized finite element scheme (the PSI scheme, to be precise) converge to the renormalized solution of an elliptic boundary value problem.

This brings us to the main objective of our work. We study a standard discretization of \cref{eq:TheEqn}: in space it is a piecewise linear, continuous, finite element discretization, whereas in time it is the implicit Euler scheme with mass lumping. We show that, under an inverse CFL condition, see \cref{eq:ReverseCFL}; and the assumption that the underlying spatial meshes support a discrete comparison principle, see \cref{assume:DMP}; the family of numerical solutions converges to the renormalized solution of \cref{eq:TheEqn}. In passing, we prove a conditional inf-sup stability of the implicit Euler method with mass lumping, a result that may be of interest on its own.

To achieve the main objective of our work we organize our presentation as follows. \Cref{sec:Notation} introduces notation and some properties of the truncation operator. In addition, \cref{sub:DefofRenormalized} presents the definition of a renormalized solution and its main properties; namely existence, uniqueness, consistency, and stability. The discussion of the discrete setting begins in \cref{sec:Discretization}, where \cref{assume:DMP} is introduced and detailed. In addition, we recall some properties of the mass lumped inner product. The time discretization is then detailed in \cref{sub:dgTime}. The main technical tools that will be used to prove convergence are the ``\emph{space-time weak-$L^p$}'' estimates of \cref{sub:Technical}. The numerical scheme and its analysis are presented in \cref{sec:TheScheme}. The final technical tool needed for convergence is a conditional inf-sup stability for the implicit Euler scheme with mass lumping. This is discussed in \cref{sub:InfSup}. The analysis of the scheme, \emph{per se}, begins in \cref{sub:Apriori}. Here we provide some useful \emph{a priori} estimates on discrete solutions which, in \cref{sub:Convergence}, serve as basis to assert convergence of our numerical scheme.
Finally, in \cref{sec:OtherStuff} we draw conclusions, some extensions, and avenues of future work.

\section{Notation and preliminaries}
\label{sec:Notation}

We begin by introducing a few relations that will be used throughout our work. $A \coloneqq B$ means equality by definition. $A \lesssim B$ means $A \leq c B$ for a nonessential constant $c$ that may change at each occurrence. $A \gtrsim B$ means $B \lesssim A$, and $A \eqsim B$ is shorthand for $A \lesssim B \lesssim A$.

The spatial dimension shall be denoted by $d \in \Natural$. The spatial domain is $\Omega \subset \Reald$ and it will be assumed to be a bounded polytope with Lipschitz boundary. The assumption that the domain is a polytope is done merely for convenience; essentially, so that it can be meshed exactly. By $T>0$ we denote our final time, and the space-time cylinder shall be denoted by
\[
  Q_T \coloneqq (0,T) \times \Omega.
\]

We shall adhere to standard notation regarding function spaces. Thus, symbols like $H^1_0(\Omega)$, $L^1(Q_T)$, or $L^2(0,T;L^3(\Omega))$ carry the expected meaning (see, e.g., \cite{MR4242224} for the notation).  In the case of vector-valued variables, the function spaces will be denoted by boldface letters.  In addition, if $N \in \Natural$ and $E \subset \Real^N$, we shall denote by $|E|$ its $N$-dimensional Lebesgue measure. By $L^0(E)$ we denote the collection of measurable, and almost everywhere finite functions $E \subset \Real^N \to \bar{\Real}$. For $p \in [1,\infty)$ the Marcinkiewicz or weak-$L^p$ space is
\[
  L^{p,\infty}(E) \coloneqq \left\{ w \in L^0(E) : \sup_{\lambda>0} \lambda^p \left| \left\{ z \in E : |w(z)| > \lambda \right\} \right| < \infty \right\}
\]
with norm
\[
  \| w \|_{L^{p,\infty}(E)} \coloneqq \sup_{\lambda>0} \lambda \left| \left\{ z \in E : |w(z)| > \lambda \right\} \right|^{1/p}.
\]
We refer the reader to \cite{MR3243734,MR3024912} for properties of these spaces. In particular; see \cite[Exercise 1.1.11]{MR3243734}, \cite[Theorem 3.18.8]{MR3024912}; we have that, if $|E|<\infty$, whenever $r<p$, then $L^{p,\infty}(E) \hookrightarrow L^r(E)$.

Regarding the problem data, we assume that the initial condition is $u_0 \in L^1(\Omega)$; whereas the right hand side $f \in L^1(Q_T)$.

For convenience, we fix a few dimension dependent numbers that will appear repeatedly in our derivations. The first one is the critical exponent in the embedding $\Hunz \hookrightarrow L^s(\Omega)$. Thus, if $d=2$, we let $s >1$ be an arbitrarily large number; whereas, for $d \geq 3$,
\begin{equation}\label{s-definition}
  s \coloneqq \frac{2d}{d-2} > 2.
\end{equation}
Finally, we let
\begin{equation}\label{qbar-definition}
  \barq \coloneqq \frac{d+2}{d+1} <2.
\end{equation}

\subsection{Truncations}
\label{sub:Truncations}

For $k >0$ we define the function $\Trunc_k :\Real \to \Real$ as
\begin{equation}
\label{eq:DefofTk}
  \Trunc_k(s) \coloneqq \min\left\{ k, \max\{ -k, s \} \right\}.
\end{equation}
Since this function is nondecreasing and odd, its primitive
\[
  \Theta_k(s) \coloneqq \int_0^s \Trunc_k (r) \diff r
\]
is convex, even, and, by construction, $\Theta_k(0) = 0$. Observe also that
\begin{equation}
\label{eq:ThetaAndAbs}
  \Theta_1(s) \leq |s| \leq \Theta_1(s) + \frac12, \qquad \forall s \in \Real.
\end{equation}
Finally, see \cite[Theorem A.1]{MR1786735}, we recall that if $w \in \Hunz$ then, for every $k>0$, $\Trunc_kw \coloneqq \Trunc_k \circ w \in \Hunz$ with
\[
  \GRAD \Trunc_kw(x) = \begin{dcases}
                      \GRAD w(x), & x \in \left\{ z \in \Omega: |w(z)| \leq k \right\}, \\
                      \bzero, & x \notin \left\{ z \in \Omega: |w(z)| \leq k \right\}.
                    \end{dcases}
\]

\subsection{Renormalized solutions}
\label{sub:DefofRenormalized}

We are now in position to define the notion of renormalized solution to \cref{eq:TheEqn}. The idea is to test, for a suitable function $\eta : \Real \to \Real$, with $\eta(u) v$, where $v \in C_0^\infty( 0,T ; \Hunz \cap \Linf)$, and integrate by parts.

\begin{definition}[renormalized solution]
\label{defn:Renormalized}
We say that the function
\[
  u \in C([0,T];L^1(\Omega))
\]
is a renormalized solution to \cref{eq:TheEqn} if:
\begin{itemize}[left=0pt]
  \item For every $k>0$, $\Trunc_ku \in L^2(0,T;\Hunz)$.

  \item We have, as $k \to \infty$,
  \[
    \frac1k \int_{Q_T} |\GRAD \Trunc_ku|^2 \diff x \diff t \to 0.
  \]

  \item For every $\eta \in C_0^{0,1}(\Real)$ and all $v \in C_0^\infty( 0,T; \Hunz \cap \Linf)$
  \begin{equation}
  \label{eq:Formulation}
    -\int_{Q_T} N(u) \partial_t v \diff x \diff t + \int_{Q_T} \GRAD u \cdot \GRAD \left( \eta(u) v \right) \diff x \diff t = \int_{Q_T} f \eta(u) v \diff x \diff t,
  \end{equation}
  where $N' = \eta$.

  \item $u(0) = u_0$ in $L^1(\Omega)$.
\end{itemize}
\end{definition}

We immediately comment that \cref{eq:Formulation} requires some explanation. Since $\eta$ has compact support, there is $k>0$ such that $\supp \eta \subset [-k,k]$. Therefore we may rewrite
\begin{align*}
  \GRAD u \cdot \GRAD\left( \eta(u) v \right) &= \GRAD u \cdot \left[ \eta(u) \GRAD v + \eta'(u) v \GRAD u  \right] = \eta(u) \GRAD u \cdot \GRAD v + v\eta'(u) |\GRAD u|^2 \\
    &= \eta( \Trunc_ku ) \GRAD \Trunc_ku \cdot \GRAD v + v\eta'(\Trunc_ku) |\GRAD \Trunc_ku|^2.
\end{align*}
The above calculation justifies why every term in \eqref{eq:Formulation} is meaningful and integrable.

As mentioned in the Introduction, this notion was introduced, for instance, in \cite{MR1489429}. The relevant results regarding renormalized solutions are summarized below. We refer to \cite{MR1489429,MR1876648} for their proofs.

\begin{theorem}[renormalized solutions]
\label{thm:Renormalized}
  Under the running assumptions for $\Omega$, $T$ we have:
  \begin{itemize}[left=0pt]
    \item \textbf{Existence and uniqueness.} For every $(u_0,f) \in L^1(\Omega) \times L^1(Q_T)$ there is a unique renormalized solution to \cref{eq:TheEqn} in the sense of \cref{defn:Renormalized}.

    \item \textbf{Consistency.} If $u \in L^\infty(0,T;\Ldeux) \cap L^2(0,T;\Hunz)$ is a weak solution to \cref{eq:TheEqn}, then it is a renormalized solution. Conversely, if a renormalized solution is sufficiently smooth, then it is also a weak solution.

    \item \textbf{Stability and continuous dependence:} Let $\{ (u_{0,m},f_m) \}_{m \in \Natural} \subset L^1(\Omega) \times L^1(Q_T)$ and denote by $\{u_m\}_{m \in \Natural}$ the corresponding family of renormalized solutions. If, as $m \to \infty$, we have that
    \[
      (u_{0,m},f_m) \to (u_{0},f)
    \]
    in $L^1(\Omega) \times L^1(Q_T)$, then there is a function
    \[
      u \in C([0,T];L^1(\Omega) ) \cap L^q(0,T;W^{1,q}_0(\Omega)), \qquad q<\barq,
    \]
    such that $u_m \to u$ in $L^\infty(0,T;L^1(\Omega) ) \cap L^q(0,T;W^{1,q}_0(\Omega))$, and $u$ is a renormalized solution to \cref{eq:TheEqn}, in the sense of \cref{defn:Renormalized}.
  \end{itemize}
\end{theorem}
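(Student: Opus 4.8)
The plan is to deduce all three assertions from a single approximation procedure supplemented by an $L^1$-contraction estimate. First I would regularize the data, choosing $(u_{0,m},f_m) \in (\Ldeux \cap L^1(\Omega)) \times (L^2(Q_T) \cap L^1(Q_T))$ with $(u_{0,m},f_m) \to (u_0,f)$ in $L^1(\Omega) \times L^1(Q_T)$; for each $m$ classical parabolic theory furnishes a weak solution $u_m \in L^\infty(0,T;\Ldeux) \cap L^2(0,T;\Hunz)$. The crux is a family of a priori bounds that are uniform in $m$ and survive the $L^1$ limit. Testing the equation for $u_m$ with $\Trunc_k(u_m)$ and integrating in time yields the truncation estimate $\|\Trunc_k u_m\|_{L^2(0,T;\Hunz)}^2 \lesssim k\left( \|u_{0,m}\|_{L^1(\Omega)} + \|f_m\|_{L^1(Q_T)} \right)$, while the elementary bound \cref{eq:ThetaAndAbs}, applied with $k=1$, converts the resulting control of $\int_\Omega \Theta_1(u_m(t))$ into a uniform $L^\infty(0,T;L^1(\Omega))$ bound. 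A Boccardo--Gallou\"et level-set argument then interpolates these two estimates to control $u_m$ and $\GRAD u_m$ in the Marcinkiewicz spaces $L^{(d+2)/d,\infty}(Q_T)$ and $L^{\barq,\infty}(Q_T)$; since $|Q_T|<\infty$, the embedding $L^{\barq,\infty} \hookrightarrow L^q$ delivers uniform bounds in $L^q(0,T;W^{1,q}_0(\Omega))$ for every $q<\barq$, hence weak compactness along a subsequence.

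With these uniform bounds, existence follows by passing to the limit in \cref{eq:Formulation}. The renormalization localizes every nonlinear term: as in the calculation following \cref{eq:Formulation}, for $k$ with $\supp\eta\subset[-k,k]$ all quantities depend on $u_m$ only through $\Trunc_k u_m$, so it suffices to combine the weak $L^2(Q_T)$ convergence of $\GRAD\Trunc_k u_m$ with the almost-everywhere convergence $u_m\to u$ in $Q_T$, which is supplied by the compactness machinery standard for renormalized solutions (in particular, strong $L^2(Q_T)$ convergence of the truncations $\Trunc_k u_m$). The decay condition $\frac1k\int_{Q_T}|\GRAD\Trunc_k u|^2\to0$ is obtained by testing with $\Trunc_1(u_m-\Trunc_k u_m)$-type functions, showing that the energy concentrated on the shells $\{k\le|u|\le k+1\}$ vanishes by absolute continuity of $\int|f|$, followed by a Ces\`aro-mean argument. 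Consistency is then essentially bookkeeping: for a weak solution of the stated regularity, $\eta(u)v$ is an admissible test function and integration by parts yields \cref{eq:Formulation}, whereas for a smooth renormalized solution one inserts renormalizers $\eta$ approximating the constant $1$ and the renormalization defect disappears in the limit precisely because of the decay condition, leaving the weak formulation.

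The decisive ingredient, from which both uniqueness and the stability statement follow, is the $L^1$-contraction
\[
  \| u_1(t) - u_2(t) \|_{L^1(\Omega)} \leq \| u_{0,1} - u_{0,2} \|_{L^1(\Omega)} + \int_0^t \| f_1 - f_2 \|_{L^1(\Omega)} \diff\tau ,
\]
valid for the renormalized solutions $u_1,u_2$ associated with data $(u_{0,1},f_1)$ and $(u_{0,2},f_2)$. Granting it, uniqueness is immediate; for the continuous dependence, the contraction (applied to the approximants, which are renormalized by consistency) shows $\{u_m\}$ is Cauchy in $C([0,T];L^1(\Omega))$, and the uniform $L^q(0,T;W^{1,q}_0(\Omega))$ bounds upgrade gradient convergence and identify the limit as the renormalized solution. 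I expect this contraction to be the principal obstacle. The difficulty is that the solutions lack the time regularity needed for $\partial_t(u_1 - u_2)$ to lie in a usable dual space, and the renormalized formulation only controls $\partial_t N(u)$ for bounded $N'$; consequently $\Trunc_k(u_1-u_2)$ is not directly an admissible test function. The standard remedy is a time regularization (a Landes-type mollification in time, or Steklov averaging) that renders such truncations admissible in both formulations. One then subtracts the two equations, tests with a regularization of $\frac1\delta\Trunc_\delta(u_1-u_2)$, controls the commutator terms generated by the mollification and by the renormalization, and passes to the limit first in the mollification and then as $\delta\to0$, so that $\frac1\delta\Trunc_\delta\to\mathrm{sign}$. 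Verifying that every remainder term genuinely vanishes in these limits is where essentially all of the technical effort resides.
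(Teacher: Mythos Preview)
The paper does not prove this theorem at all: immediately before the statement it announces ``The relevant results regarding renormalized solutions are summarized below. We refer to \cite{MR1489429,MR1876648} for their proofs,'' and after the statement the paper moves directly to the discretization. So there is no proof in the paper to compare against; \cref{thm:Renormalized} is quoted as background.

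Your sketch is a fair summary of the strategy carried out in those references (regularized data, truncation energy estimates yielding the $k$-linear bound, Boccardo--Gallou\"et level-set decomposition for Marcinkiewicz control, compactness of truncations, passage to the limit in the renormalized formulation, and an $L^1$-contraction via doubling/time-regularization for uniqueness and stability). In the context of this paper, however, the intended ``proof'' is simply the citation, and you should present the result as a summary of known theory rather than supply an independent argument.
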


\section{Discretization}
\label{sec:Discretization}

Let us now describe the numerical scheme that we will employ. In essence we will consider a $dG(0)$-in-time and $\polP_1$ in space discretization. In our description, we will adhere to established notation and lexicon; see \cite{MR4242224,MR4269305,MR4248810} for context.

\subsection{Spatial discretization}
\label{sub:FEM}
We begin with the spatial discretization. Since it is assumed that $\Omega$ is a polytope, it can be meshed exactly. We let $\{\Triang \}_{h >0}$ denote a conforming and quasiuniform family of simplicial triangulations of $\bar\Omega$ parametrized by $h >0$, which denotes the mesh size. By $\{\Fespace\}_{h > 0}$ we denote the ensuing family of finite element spaces, \ie
\[
  \Fespace \coloneqq \left\{ w_h \in C(\bar\Omega) : w_{h|T} \in \polP_1, \ \forall T \in \Triang \;,\; w_{h|\partial\Omega}=0\right\}.
\]
Given $h > 0$ we denote by $\Vert$ the collection of vertices of $\Triang$, $\VertInt = \Vert \cap \Omega$, and $\VertBd = \Vert \cap \partial\Omega$. The canonical basis of $\Fespace$ is denoted by $\{\phi_\vertex\}_{\vertex \in \VertInt}$. The Lagrange interpolant $\calL_h : C(\bar\Omega) \to \Fespace$ is defined as
\[
  \calL_h w (x) = \sum_{\vertex \in \VertInt} w(\vertex) \phi_\vertex(x).
\]
The $L^2$-projection $\calP_h : L^1(\Omega) \to \Fespace$ is defined as
\[
  \int_\Omega (w - \calP_h w) \varphi_h \diff x = 0, \qquad \forall \varphi_h \in \Fespace.
\]
We recall that since the family of meshes is assumed to be quasiuniform, see \cite[Theorem 4.14]{MR4320894}, $\calP_h$ is stable in $L^1$, \ie
\begin{equation}
\label{eq:L1stabL2proj}
\| \calP_h w \|_{L^1(\Omega)} \leq C_\calP \| w \|_{L^1(\Omega)}, \qquad \forall w \in L^1(\Omega).
\end{equation}

For our constructions, it is necessary to assume that our mesh supports a \emph{discrete comparison principle}. Namely, we require that a version of \cite[Lemma 11]{MR3073956} holds.

\begin{assumption}[DMP]
\label{assume:DMP}
  For every $k>0$ and all $w_h \in \Fespace$ we have
  \[
    \GRAD w_h \cdot \GRAD \calL_h \Trunc_kw_h \geq |\GRAD \calL_h \Trunc_k w_h |^2, \qquad \mae \ \Omega,
  \]
  and, as a consequence,
  \[
    |\GRAD w_h | \geq |\GRAD \calL_h \Trunc_k w_h |, \qquad \mae \ \Omega.
  \]
\end{assumption}

We comment that, as mentioned in \cite{MR3073956}, this property holds whenever the mesh $\Triang$ is \emph{nonobtuse}, meaning that every dihedral angle in the triangulation is smaller than, or equal to, $\tfrac\pi2$, which in turn implies that
\[
  \GRAD \phi_{\vertex} \cdot \GRAD \phi_{\othervertex} \leq 0, \quad \mae \ \Omega, \qquad \forall \vertex, \othervertex \in \VertInt, \ \vertex \neq \othervertex.
\]

\subsection{Mass lumping}
\label{sub:MassLumping}

For $p \in [1,\infty)$ we define the so-called mass lumped $L^p$-norm
\[
  \| v_h \|_{L^p_h}^p \coloneqq \int_\Omega \calL_h \left(  |v_h|^p \right) \diff x, \qquad \forall v_h \in \Fespace.
\]
As expected, the case $p=2$ can be defined from an inner product, namely,
\[
  (v_h, w_h)_{L^2_h} \coloneqq \int_\Omega \calL_h \left( v_hw_h \right) \diff x, \qquad \forall v_h,w_h \in \Fespace,
\]
which we call the mass lumped inner product. Some, simple yet important, properties of this inner product and the $L^p_h$-norms are detailed below.

\begin{proposition}[mass lumping]
\label{prop:MassLump}
  Let $p \in [1,\infty)$. The mass lumped $L^p$-norm satisfies
  \begin{equation}
  \label{eq:ErrEstMassLump}
    \| w_h \|_{L^p(\Omega)} \leq \| w_h\|_{L^p_h} \leq C_p \| w_h \|_{L^p(\Omega)}, \qquad \forall w_h \in \Fespace,
  \end{equation}
  where $C_p$ is independent of $h$. In particular, $C_2 = \sqrt{d+2}$.
  In addition, there is a constant $C_Q$, independent of $h>0$, for which
  \begin{equation}\label{Eq:error-ML}
    \left| (v_h,w_h)_{L^2_h} - \int_\Omega v_h w_h \diff x \right| \leq C_Q h \| v_h \|_{\Ldeux} \| \GRAD w_h \|_{\Ldeuxd}, \quad \forall v_h,w_h \in \Fespace.
  \end{equation}
\end{proposition}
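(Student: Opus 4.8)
The plan is to establish both inequalities element by element, reducing everything to a reference simplex $\hat T$ via the affine map $F_T$ and using scaling, together with two elementary facts: the convexity of $t \mapsto |t|^p$ and the exactness of the vertex (mass-lumping) quadrature rule on $\polP_1$. For the lower bound in \cref{eq:ErrEstMassLump} I would argue pointwise: on each $T \in \Triang$ with vertices $\{\vertex_i\}$ the restriction $w_h|_T$ is affine, so any $x \in T$ equals $\sum_i \lambda_i \vertex_i$ with barycentric weights $\lambda_i \ge 0$, $\sum_i \lambda_i = 1$, whence $w_h(x) = \sum_i \lambda_i w_h(\vertex_i)$. Convexity of $t \mapsto |t|^p$ then gives $|w_h(x)|^p \le \sum_i \lambda_i |w_h(\vertex_i)|^p = \calL_h(|w_h|^p)(x)$, since the right-hand side is exactly the affine interpolant of the nodal values of $|w_h|^p$. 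Integrating over $T$ and summing over the triangulation yields $\|w_h\|_{L^p(\Omega)}^p \le \|w_h\|_{L^p_h}^p$.

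For the upper bound I would note that, under pullback, the nodal values are preserved and both $\|\cdot\|_{L^p_h}^p$ and $\|\cdot\|_{L^p(\Omega)}^p$ scale on each element by the common factor $|\det DF_T| = |T|/|\hat T|$. Hence the ratio $\|w_h\|_{L^p_h,T}/\|w_h\|_{L^p(T)}$ is scale invariant and bounded by the constant $C_p$ coming from the equivalence, on the finite-dimensional space $\polP_1(\hat T)$, of the two norms $\hat w \mapsto \bigl(\tfrac{|\hat T|}{d+1}\sum_i |\hat w(\vertex_i)|^p\bigr)^{1/p}$ and $\|\hat w\|_{L^p(\hat T)}$. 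Raising to the $p$-th power and summing over elements gives the claim with $C_p$ independent of $h$. The explicit value $C_2 = \sqrt{d+2}$ follows from the exact local formulas $\int_T \calL_h(w_h^2)\diff x = \tfrac{|T|}{d+1}\sum_i c_i^2$ and $\int_T w_h^2 \diff x = \tfrac{|T|}{(d+1)(d+2)}\bigl(\sum_i c_i^2 + (\sum_i c_i)^2\bigr)$ with $c_i = w_h(\vertex_i)$: the ratio of these is maximized, and equal to $d+2$, precisely when $\sum_i c_i = 0$.

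For \cref{Eq:error-ML} I would view $E_T(v_h,w_h) \coloneqq \int_T \calL_h(v_h w_h)\diff x - \int_T v_h w_h \diff x$ as a bilinear form on $\polP_1(T) \times \polP_1(T)$. Since $v_h w_h \in \polP_2$ while the vertex rule is exact on $\polP_1$, $E_T$ vanishes whenever either argument is constant. On $\hat T$ the corresponding form $\hat E$ is thus a bounded bilinear form annihilating constants in its second argument; factoring through $\polP_1(\hat T)/\polP_0(\hat T)$, on which $|\cdot|_{H^1(\hat T)} = \|\nabla(\cdot)\|_{L^2(\hat T)}$ is a norm, and using finite dimensionality yields $|\hat E(\hat v,\hat w)| \lesssim \|\hat v\|_{L^2(\hat T)}\,\|\nabla \hat w\|_{L^2(\hat T)}$. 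Undoing the scaling, the determinant factors from the two $L^2$ norms and from $E_T = |\det DF_T|\,\hat E$ cancel, while one factor of $\|DF_T\| \lesssim h$ survives through the gradient, producing $|E_T(v_h,w_h)| \lesssim h\,\|v_h\|_{L^2(T)}\,\|\GRAD w_h\|_{L^2(T)}$. A discrete Cauchy--Schwarz over the elements then assembles the global estimate with $C_Q$ independent of $h$.

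The convexity inequality and the exact local integrals are routine; the step requiring the most care is the scaling analysis of $E_T$ in \cref{Eq:error-ML}, specifically tracking how the Jacobian factors combine so that exactly one factor of $h$, and no more, appears. The asymmetry of the estimate---the $L^2$ norm on $v_h$ but the $H^1$ seminorm on $w_h$---is precisely what lets only a single power of $\|DF_T\|$ enter, and keeping this bookkeeping correct is the main obstacle.
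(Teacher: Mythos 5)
Your proposal is correct, and it actually proves more than the paper does. For the first inequality in \cref{eq:ErrEstMassLump} you and the paper use the same idea---convexity of $t \mapsto |t|^p$ applied to the fact that $\calL_h(|w_h|^p)$ dominates $|w_h|^p$ pointwise; the paper phrases it globally (the basis functions are nonnegative and form a partition of unity, so $w_h$ is a convex combination of its nodal values), while you phrase it element by element through barycentric coordinates, which is a cosmetic difference (and, if anything, sidesteps the small subtlety that near $\partial\Omega$ the interior basis functions sum to less than one). The real divergence is in the remaining claims: for the upper bound, the value $C_2 = \sqrt{d+2}$, and the quadrature-error estimate \cref{Eq:error-ML}, the paper gives no argument at all and simply cites the literature (condition-number estimates for the mass matrix in Ern--Guermond, and a lemma from another reference). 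You instead supply self-contained proofs: scaling to $\hat T$ plus equivalence of norms on the finite-dimensional space $\polP_1(\hat T)$ for the upper bound; the exact integration identities $\int_T \calL_h(w_h^2)\diff x = \tfrac{|T|}{d+1}\sum_i c_i^2$ and $\int_T w_h^2 \diff x = \tfrac{|T|}{(d+1)(d+2)}\bigl(\sum_i c_i^2 + (\sum_i c_i)^2\bigr)$, which are correct and give the sharp constant together with the equality case $\sum_i c_i = 0$; and, for \cref{Eq:error-ML}, a reference-element argument exploiting that the vertex rule is exact on $\polP_1$, so the error form annihilates constants in its second slot, after which quotienting by constants and undoing the affine scaling leaves exactly one factor of $\|DF_T\| \lesssim h$ (your Jacobian bookkeeping is right, and the final discrete Cauchy--Schwarz over $\Triang$ assembles the global bound). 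What the paper's route buys is brevity; what yours buys is a complete, verifiable proof with an explicit, sharp constant, at the cost of the scaling analysis you correctly identify as the delicate step. One small remark: quasiuniformity is not needed anywhere in your argument---shape regularity suffices---which matches the fact that the constants in the proposition are purely local.
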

\begin{proof}
 The first inequality in \eqref{eq:ErrEstMassLump} can be easily deduced from the fact that the canonical basis contains only non-negative functions and forms a partition of unity. Thus
  \[
    w_h = \sum_{\vertex \in \VertInt} W_\vertex \phi_\vertex
  \]
  is in fact a convex combination of the numbers $\{W_\vertex\}_{\vertex \in \VertInt} \subset \Real$. Therefore, since the function $\Real \ni s \mapsto |s|^p$ is convex,
  \begin{align*}
    \int_\Omega |w_h|^p \diff x &= \int_\Omega \left| \sum_{\vertex \in \VertInt} W_\vertex \phi_\vertex \right|^p \diff x \leq \int_\Omega \sum_{\vertex \in \VertInt} \left| W_\vertex \right|^p  \phi_\vertex \diff x = \int_\Omega \calL_h(|w_h|^p) \diff x
      \\
      &= \| w_h \|_{L_h^p}^p,
  \end{align*}
  as claimed. The second inequality in \eqref{eq:ErrEstMassLump} is standard in the literature. It follows the proof of condition number estimates for the mass matrix; see, for instance, \cite[Proposition 28.6]{MR4269305}. We refer also to \cite[Lemma 3.9]{MR3309171} for the value of the constant in the second inequality, and for the proof of \eqref{Eq:error-ML}.
\end{proof}

Next we show how the discrete $L^1_h$-norm interacts with the function $\Theta_k$.

\begin{lemma}[nonlinear estimate]
\label{lem:ThetakVSmassLump}
  For every $k \geq 0$ and all $w_h \in \Fespace$ we have
  \[
    \| \Theta_k( w_h ) \|_{L^1_h} \leq C_1 k \| w_h \|_{L^1(\Omega)},
  \]
  where $C_1$ is the constant from \cref{prop:MassLump}.
\end{lemma}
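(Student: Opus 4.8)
The plan is to push everything down to the nodal values of $w_h$, where both the mass-lumped norm and the composition with $\Theta_k$ become completely transparent, and then to control the nonlinearity through a single pointwise inequality for $\Theta_k$.

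First I would unfold the left-hand side. Writing $W_\vertex \coloneqq w_h(\vertex)$ for the nodal values and $m_\vertex \coloneqq \int_\Omega \phi_\vertex \diff x \ge 0$ for the lumped masses, and recalling that $\Theta_k \ge 0$ (it is convex, even, and vanishes at the origin), the definition of $\| \cdot \|_{L^1_h}$ together with the definition of $\calL_h$ gives
\[
  \| \Theta_k(w_h) \|_{L^1_h} = \int_\Omega \calL_h\bigl( \Theta_k(w_h) \bigr) \diff x = \sum_{\vertex \in \VertInt} \Theta_k(W_\vertex)\, m_\vertex .
\]
This is the step where I use that, although $\Theta_k(w_h) \notin \Fespace$ in general, the lumped norm only ever sees the nodal values of its argument, so the nonlinear composition causes no difficulty and the equality above is exact.

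Second, the analytic heart of the argument is the pointwise bound
\[
  0 \le \Theta_k(s) \le k|s|, \qquad \forall s \in \Real .
\]
I would establish this directly from $\Theta_k(s) = \int_0^s \Trunc_k(r)\diff r$: for $s \ge 0$ one has $0 \le \Trunc_k(r) \le k$ on $[0,s]$, so integration yields $0 \le \Theta_k(s) \le ks$; the case $s<0$ follows since $\Theta_k$ is even. Inserting this inequality nodewise and factoring out $k$ recovers a lumped norm of $w_h$ itself,
\[
  \sum_{\vertex \in \VertInt} \Theta_k(W_\vertex)\, m_\vertex \le k \sum_{\vertex \in \VertInt} |W_\vertex|\, m_\vertex = k \int_\Omega \calL_h(|w_h|)\diff x = k\, \| w_h \|_{L^1_h} .
\]
Finally I would invoke the upper equivalence $\| w_h \|_{L^1_h} \le C_1 \| w_h \|_{L^1(\Omega)}$, which is the second inequality in \eqref{eq:ErrEstMassLump} of \cref{prop:MassLump} taken with $p=1$, to conclude.

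I do not anticipate a genuine obstacle: the lemma is elementary once the nodal structure of the lumped norm is recognized. The only two points requiring care are that the composition $\Theta_k(w_h)$ must be read through its nodal values, so that the first display is an identity rather than an approximation, and the bookkeeping of the constant: the factor $k$ comes from the sharp pointwise bound on $\Theta_k$, while the remaining factor is exactly the constant $C_1$ already isolated in \cref{prop:MassLump}, so no new constant is introduced.
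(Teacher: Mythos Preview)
Your proof is correct and follows essentially the same line as the paper's: expand the lumped norm at the nodes, bound $\Theta_k(s)\le k|s|$ pointwise, then invoke the $p=1$ norm equivalence from \cref{prop:MassLump}. The only cosmetic difference is that the paper obtains the pointwise bound by splitting vertices according to whether $|w_h(\vertex)|\le k$ or $>k$ and using the explicit piecewise formula for $\Theta_k$, whereas you get $\Theta_k(s)\le k|s|$ in one stroke from $|\Trunc_k|\le k$; your route is slightly more direct.
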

\begin{proof}
  By definition
  \[
     \Theta_k(s) = \begin{dcases}
                    \frac12 s^2, & |s| \leq k, \\
                    k|s| - \frac{k^2}2, & |s|>k.
                  \end{dcases}
  \]
  Therefore, upon defining
  \[
    S_k(w_h) \coloneqq \left\{ \vertex \in \VertInt : |w_h(\vertex)| \leq k \right\}, \qquad
    B_k(w_h) \coloneqq \VertInt \setminus S_k(w_h),
  \]
  we may compute
  \begin{align*}
    \| \Theta_k( w_h ) \|_{L^1_h} &= \int_\Omega \calL_h \Theta_k(w_h) \diff x 
     = \sum_{\vertex \in \VertInt} \Theta_k( w_h(\vertex) ) \int_\Omega \phi_\vertex \diff x
     \\
     &= \frac12 \sum_{\vertex \in S_k(w_h)}  |w_h(\vertex)|^2 \int_\Omega \phi_\vertex \diff x 
     + \sum_{\vertex \in B_k(w_h)} \left( k |w_h(\vertex)| - \frac{k^2}2 \right) \int_\Omega \phi_\vertex \diff x 
     \\
     &\leq \frac12 \sum_{\vertex \in S_k(w_h)} |w_h(\vertex)|^2 \int_\Omega \phi_\vertex \diff x 
     + k \sum_{\vertex \in B_k(w_h)} |w_h(\vertex)| \int_\Omega \phi_\vertex \diff x .
  \end{align*}
  We now use that,
  \[
    |s| \leq k \qquad \implies \qquad s^2 \leq k|s|,
  \]
  to continue our estimate as
  \[
    \| \Theta_k( w_h ) \|_{L^1_h} \leq k \sum_{\vertex \in \VertInt} |w_h(\vertex)| \int_\Omega \phi_\vertex \diff x.
  \]
  Finally, we use \eqref{eq:ErrEstMassLump} to conclude.
\end{proof}

\subsection{Temporal discretization}
\label{sub:dgTime}
We can now describe the temporal discretization. Given $\calN \in \Natural$, we let $\dt = \{t_n\}_{n=0}^\calN$ be a partition of $[0,T]$, \ie
\[
  0 = t_0 <  \cdots < t_\calN = T.
\]
We denote $\tau_n = t_{n} - t_{n-1}$, and $I_n = (t_{n-1},t_n]$. By $\dt >0$ we denote any collection of such temporal partitions. By $\dt \to 0$ we denote 
\[
  \lim_{\calN \to \infty} \max_{n=1,\ldots, \calN} \tau_n = 0.
\]
This could be more rigorously described using nets \cite[\S I.6]{MR1700700}, but we shall not make an attempt to do so.

The space of space-time discrete functions is then defined as
\[
  \frakX_h^\dt \coloneqq \left( \Fespace \right)^{\calN+1},
\]
and understand it as the space of functions $w_h^\dt :[0,T] \to \Fespace$ such that, if $\{w_h^n\}_{n=0}^{\calN} \in \frakX_h^\dt$, then
\[
  w_h^\dt(0) = w_h^0, \qquad w_h^\dt(t) = w_h^n, \ t \in I_n, \quad n = 1, \ldots, \calN.
\]
As usual, $\jump{w_h^\dt }_{n-1} \coloneqq w_h^n - w_h^{n-1}$. Given $w_h^\dt \in \frakX_h^\dt$ its so-called \emph{reconstruction} is the function $\calR^\dt w_h^\dt \in C^{0,1}([0,T];\Fespace)$  defined as
\[
  \calR^\dt w_h^\dt (t) = w_h^{n-1} + \jump{w_h^\dt}_{n-1} \frac{t- t_{n-1}}{\tau_n} , \qquad t \in I_n, \quad n = 1, \ldots, \calN.
\]
Notice that, for all $n = 0, \ldots, \calN$, $\calR^\dt w_h^\dt(t_n) = w_h^\dt(t_n)$  and that
\[
  \partial_t \calR^\dt w_h^\dt (t) = \frac1{\tau_n} \jump{w_h^\dt}_{n-1} , \qquad t \in \mathring{I}_n, \quad n = 1, \ldots, \calN.
\]
We endow the space $\frakX_h^\dt$ with the norm
\begin{align*}
  \| w_h^\dt \|_{\frakX_h^\dt}^2 &\coloneqq \| w_h^\dt \|_{L^2(0,T;\Hunz)}^2 + \| \partial_t \calR^\dt w_h^\dt \|_{L^2(0,T;\Hmun)}^2 + \| w_h^\dt(T) \|_{\Ldeux}^2  \\
    &+ \sum_{n=1}^\calN \left\| \jump{w_h^\dt}_{n-1} \right\|_\Ldeux^2.
\end{align*}

Finally, we let $\frakY_h^\dt \coloneqq \frakX_h^\dt$ algebraically, but normed as
\[
  \| w_h^\dt \|_{\frakY_h^\dt}^2 \coloneqq  \| w_h^\dt(0) \|_\Ldeux^2 + \| w_h^\dt \|_{L^2(0,T;\Hunz)}^2.
\]

\subsection{Some estimates from truncations}
\label{sub:Technical}
Here we present some estimates that shall be useful for our purposes. In a sense, these represent the time-dependent version of those in \cite[Section 2]{MR2266830}, and a discrete version of those in \cite[Section IV]{MR1025884}. These estimates shall be the fundamental tools that will allow us to assert convergence.

We begin by recalling a technical result from \cite{MR2266830}. It essentially asserts that if a finite element function is ``\emph{big}'' at a point, it cannot be ``\emph{too small}'' in the whole element that contains said point.

\begin{lemma}[truncation vs.~interpolation]
\label{lem:Girault}
  Let $k>0$, $w_h \in \Fespace$, and $T \in \Triang$ be such that there is $ y \in T$ for which
  \[
    |w_h(y)| \geq k.
  \]
  Then, there is a subsimplex $S_T \subset T$, with $|S_T| \eqsim |T|$ for which
  \[
    |\calL_h \Trunc_k w_h(x) | \geq \frac{k}2, \qquad \forall x \in S_T.
  \]
\end{lemma}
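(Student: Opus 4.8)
The plan is to exploit that $w_h|_T \in \polP_1$ is affine on $T$ and that the Lagrange interpolant of a truncation is again affine on $T$, so the whole statement reduces to an elementary estimate in barycentric coordinates on the single simplex $T$. Since $\Trunc_k$ is odd and $\calL_h$ is linear, replacing $w_h$ by $-w_h$ only flips the sign of $\calL_h \Trunc_k w_h$ while leaving its modulus unchanged; hence I may assume without loss of generality that $w_h(y) \geq k$.

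Next I transfer the pointwise hypothesis to a vertex. Because $w_h|_T$ is affine and $T$ is the convex hull of its vertices, $w_h$ attains its maximum over $T$ at a vertex, so there is a vertex $\vertex_0$ of $T$ with $w_h(\vertex_0) \geq w_h(y) \geq k$. This $\vertex_0$ is necessarily an interior vertex, since $w_h \in \Fespace$ vanishes at boundary vertices whereas $w_h(\vertex_0) \geq k > 0$; consequently $\Trunc_k(w_h(\vertex_0)) = k$. Writing $g \coloneqq \calL_h \Trunc_k w_h$ and denoting by $\lambda_0, \ldots, \lambda_d$ the barycentric coordinates of $T$, with $\lambda_0$ attached to $\vertex_0$, the function $g$ is affine on $T$ with nodal values $g(\vertex_i) = \Trunc_k(w_h(\vertex_i)) \in [-k,k]$ (at boundary vertices this reads $0 = \Trunc_k(0)$, so the identity holds at every vertex of $T$). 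In particular $g(\vertex_0) = k$ and $g(\vertex_i) \geq -k$ for all $i$, whence, using $\lambda_i \geq 0$ and $\sum_{i=0}^d \lambda_i = 1$,
\[
  g(x) = \sum_{i=0}^d \lambda_i(x) g(\vertex_i) \geq k\lambda_0(x) - k\sum_{i=1}^d \lambda_i(x) = k\bigl( 2\lambda_0(x) - 1 \bigr), \qquad x \in T.
\]

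Finally I choose the subsimplex. Define $S_T \coloneqq \{ x \in T : \lambda_0(x) \geq \tfrac34 \}$, which is the homothetic copy of $T$ of ratio $\tfrac14$ centered at $\vertex_0$ (its vertices are $\vertex_0$ together with the edge points $\tfrac34\vertex_0 + \tfrac14\vertex_i$), so that $|S_T| = 4^{-d}|T| \eqsim |T|$ with constant depending only on $d$. For $x \in S_T$ the displayed inequality gives $g(x) \geq k\bigl(2\cdot\tfrac34 - 1\bigr) = \tfrac{k}2 > 0$, hence $|\calL_h \Trunc_k w_h(x)| = g(x) \geq \tfrac{k}2$, as claimed. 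I do not anticipate a genuine obstacle: the only delicate point is promoting the pointwise bound $|w_h(y)| \geq k$ to a vertex value of $T$, which is precisely where affineness on the simplex is used, and the measure of $S_T$ depends only on the dimension, giving the equivalence $|S_T| \eqsim |T|$ independently of $h$.
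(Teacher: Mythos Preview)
Your argument is correct. The paper itself does not prove this lemma; it simply cites \cite[Lemma~2.3]{MR2266830}, so there is no in-paper proof to compare against, but your barycentric-coordinate argument is precisely the elementary proof one expects: reduce to a vertex via affineness on $T$, bound the interpolated truncation below by $k(2\lambda_0-1)$, and take the homothetic cap $\{\lambda_0\ge 3/4\}$ of volume $4^{-d}|T|$.
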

\begin{proof}
  See \cite[Lemma 2.3]{MR2266830}.
\end{proof}

The following result is the main technical tool of this work.

\begin{theorem}[truncations]
\label{thm:Truncations}
  Assume that $\{w_h^\dt \in \frakX_h^\dt \}_{h>0,\dt >0 }$ is a family of space-time discrete functions for which there are constants $F,U >0$ such that, for every $k>0$,
  \begin{equation}
  \label{eq:MainEst}
    \left\|  \calL_h\Theta_k( w_h^\dt ) \right\|_{L^\infty(0,T;L^1(\Omega))} + \int_0^T \int_\Omega |\GRAD \calL_h\Trunc_kw_h^\dt|^2 \diff x \diff t \leq  k \left( F+ U \right).
  \end{equation}
  Then, recalling that $\barq$ is defined in \cref{qbar-definition}, we have
  \begin{align}
  \label{eq:LinftytL1x}
    \| w_h^\dt \|_{L^\infty(0,T;L^1(\Omega))} &\leq F + U + \frac12|\Omega|,
    \\
  \label{eq:LqWeakSpaceTime}
    \| \GRAD w_h^\dt \|_{\bfL^{\barq,\infty}(Q_T)}^\barq &\lesssim \max\left\{ \left( F + U + \frac12|\Omega| \right)^{2/d}, 1 \right\} \left( F + U\right) ,
    \\
  \label{eq:LsWeakSpaceTime}
    \| w_h^\dt \|_{L^{(d+2)/d,\infty}(Q_T)}^{(d+2)/d} &\lesssim \max\left\{ \left( F + U + \frac12|\Omega| \right)^{2/d}, 1 \right\} \left( F + U \right).
  \end{align}
\end{theorem}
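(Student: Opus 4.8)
The plan is to establish \cref{eq:LinftytL1x} first and then bootstrap from the two quantitative facts encoded in \cref{eq:MainEst}: a uniform $L^\infty(0,T;L^1)$ control of the truncations and the truncation energy $\int_{Q_T}|\GRAD\calL_h\Trunc_kw_h^\dt|^2\diff x\diff t\le k(F+U)$. For \cref{eq:LinftytL1x} I would set $k=1$ in \cref{eq:MainEst} and discard the nonnegative gradient term, keeping $\|\calL_h\Theta_1(w_h^\dt)\|_{L^\infty(0,T;L^1(\Omega))}\le F+U$. At a fixed time, \cref{eq:ThetaAndAbs} applied at each interior vertex gives $|w_h^\dt(\vertex)|\le\Theta_1(w_h^\dt(\vertex))+\tfrac12$; multiplying by $\phi_\vertex\ge0$ and summing yields $\calL_h|w_h^\dt|\le\calL_h\Theta_1(w_h^\dt)+\tfrac12\sum_{\vertex\in\VertInt}\phi_\vertex$ pointwise. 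Integrating over $\Omega$, invoking the first inequality in \cref{eq:ErrEstMassLump} (so that $\|w_h^\dt\|_{L^1(\Omega)}\le\|w_h^\dt\|_{L^1_h}=\int_\Omega\calL_h|w_h^\dt|\diff x$), and using $\sum_{\vertex\in\VertInt}\int_\Omega\phi_\vertex\diff x\le|\Omega|$, delivers \cref{eq:LinftytL1x}.

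The engine for \cref{eq:LqWeakSpaceTime,eq:LsWeakSpaceTime} is a parabolic Gagliardo--Nirenberg estimate applied to $v\coloneqq\calL_h\Trunc_kw_h^\dt$, which at each time lies in $\Fespace\subset\Hunz$. From \cref{eq:ErrEstMassLump} and \cref{eq:LinftytL1x} one has $\|v\|_{L^\infty(0,T;L^1(\Omega))}\lesssim M_0$, where $M_0\coloneqq F+U+\tfrac12|\Omega|$, while \cref{eq:MainEst} gives $\int_{Q_T}|\GRAD v|^2\diff x\diff t\le k(F+U)$. Interpolating, at each $t$, $L^{p_0}(\Omega)$ between $L^1(\Omega)$ and $L^s(\Omega)$ and using the Poincar\'e--Sobolev inequality $\|v(t)\|_{L^s(\Omega)}\lesssim\|\GRAD v(t)\|_{\Ldeuxd}$ for $\Hunz$, then integrating in time with the exponent $p_0=\tfrac{2(d+1)}{d}$ chosen so that the gradient enters to the power $2$, yields $\|v\|_{L^{p_0}(Q_T)}^{p_0}\lesssim M_0^{2/d}\,k(F+U)$. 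For $d\ge3$ this uses $s$ from \cref{s-definition}; for $d=2$ the same computation works with $s$ taken arbitrarily large, and $d=1$ is elementary.

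To prove \cref{eq:LsWeakSpaceTime} I would bound the space--time super-level sets of $w_h^\dt$. Since $w_h^\dt$ is affine on each simplex, $|w_h^\dt|>k$ forces the maximal vertex value on that simplex to exceed $k$; hence $\{|w_h^\dt|>k\}$ is contained in the union (per time slab) of the simplices carrying a vertex value larger than $k$. On each such simplex \cref{lem:Girault} produces a subsimplex of comparable measure on which $|\calL_h\Trunc_kw_h^\dt|\ge k/2$, so that $|\{|w_h^\dt|>k\}|\lesssim|\{|\calL_h\Trunc_kw_h^\dt|\ge k/2\}|$. Chebyshev's inequality together with the Gagliardo--Nirenberg bound then gives $|\{|w_h^\dt|>k\}|\lesssim k^{-p_0}\|v\|_{L^{p_0}(Q_T)}^{p_0}\lesssim M_0^{2/d}(F+U)\,k^{-(d+2)/d}$, which is \cref{eq:LsWeakSpaceTime} after taking the supremum over $k$ and absorbing $M_0^{2/d}\le\max\{M_0^{2/d},1\}$.

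For the gradient estimate \cref{eq:LqWeakSpaceTime}, I would fix $\lambda>0$ and a free parameter $k>0$ and split $\{|\GRAD w_h^\dt|>\lambda\}$ according to whether the containing simplex (in the relevant time slab) has all its vertex values bounded by $k$ or not. On the ``good'' simplices the truncation is inactive, so $w_h^\dt=\calL_h\Trunc_kw_h^\dt$ there and the (elementwise constant) gradients coincide; Chebyshev plus the truncation energy bound control this part by $\lambda^{-2}k(F+U)$. The ``bad'' simplices form exactly the set bounded in the previous step, of measure $\lesssim M_0^{2/d}(F+U)k^{-(d+2)/d}$. Optimizing $\lambda^{-2}k(F+U)+M_0^{2/d}(F+U)k^{-(d+2)/d}$ over $k$, with balancing choice $k\eqsim M_0^{1/(d+1)}\lambda^{d/(d+1)}$, produces decay $\lambda^{-\barq}$ with constant $\lesssim(F+U)M_0^{1/(d+1)}$, and since $M_0^{1/(d+1)}\le\max\{M_0^{2/d},1\}$ this gives \cref{eq:LqWeakSpaceTime}. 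The main obstacle throughout is that, unlike in the continuous setting, one cannot write $\GRAD w_h^\dt=\GRAD\Trunc_kw_h^\dt$ on $\{|w_h^\dt|\le k\}$; the elementwise dichotomy together with \cref{lem:Girault} is precisely what replaces this identity and lets the two hypotheses in \cref{eq:MainEst} interact to yield the Marcinkiewicz bounds.
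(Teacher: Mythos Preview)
Your proposal is correct and takes essentially the same approach as the paper: the same pointwise use of \cref{eq:ThetaAndAbs} for \cref{eq:LinftytL1x}, the same $L^1$--$L^s$ interpolation at exponent $p_0=r=2(d+1)/d$ combined with the Sobolev embedding, the same good/bad simplex decomposition via \cref{lem:Girault}, and the same Chebyshev arguments. The only differences are organizational---you isolate the interpolation as a standalone $L^{p_0}(Q_T)$ bound on $\calL_h\Trunc_kw_h^\dt$ and reuse it, you establish \cref{eq:LsWeakSpaceTime} before \cref{eq:LqWeakSpaceTime} (the paper does the reverse), and your balancing choice $k\eqsim M_0^{1/(d+1)}\lambda^{d/(d+1)}$ is slightly sharper than the paper's $k=\lambda^{d/(d+1)}$ before you relax the constant to match the stated form.
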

\begin{proof}
  Set, in \cref{eq:MainEst}, $k=1$ to observe that, since $w_h^\dt$ is piecewise constant in time,
  \[
    \max_{n=1,\ldots,\calN} \int_\Omega \calL_h \Theta_1( w_h^n ) \diff x \leq F + U.
  \]
Let $n \in \{1, \ldots, \calN\}$ be arbitrary. By \cref{eq:ThetaAndAbs}, and the fact that $\calL_h$ is order preserving, the previous estimate implies
  \begin{equation}
  \label{eq:LinftL1xDetails}
    \| w_h^n \|_{L^1_h} = \int_\Omega \calL_h |w_h^n| \diff x \leq \int_\Omega \calL_h\left(  \Theta_1( w_h^n ) + \frac12 \right) \diff x \leq F + U + \frac12|\Omega|.
  \end{equation}
  Since $n$ is arbitrary, estimate \eqref{eq:ErrEstMassLump} implies \cref{eq:LinftytL1x}.

  With this at hand, we now obtain an auxiliary estimate.  Let, once again, $n$ be arbitrary. By observing that, for every $k>0$, $|\Trunc_k w_h^n | \leq |w_h^n|$ we may then write
\begin{align*}
    \int_\Omega |\calL_h \Trunc_kw_h^n| \diff x &= \int_\Omega \left| \sum_{\vertex \in \VertInt} \Trunc_kw_h^n(\vertex) \phi_\vertex \right| \diff x \leq \int_\Omega \sum_{\vertex \in \VertInt} |\Trunc_k w_h^n(\vertex)| \phi_\vertex \diff x
     \\
     &\leq \int_\Omega \sum_{\vertex \in \VertInt} | w_h^n(\vertex)| \phi_\vertex \diff x \leq \int_\Omega \calL_h |w_h^n| \diff x \leq F+ U + \frac12|\Omega|,
  \end{align*}
  where, in the last step, we used \eqref{eq:LinftL1xDetails}. Thus, since $n$ was assumed arbitrary,
  \begin{equation}
  \label{eq:ScaryEstimate}
    \max_{n=1,\ldots,\calN}\int_\Omega |\calL_h \Trunc_kw_h^n| \diff x  \leq F+U + \frac12|\Omega|.
  \end{equation}

  We now prove \cref{eq:LqWeakSpaceTime}. Fix $\lambda >0$ and observe that, since $w_h^\dt$ is piecewise constant in time,
  \[
    |\calA(\lambda)| \coloneqq \left| \left\{ (t,x) \in Q_T : |\GRAD w_h^\dt(t,x)| > \lambda \right\} \right| = 
      \sum_{n=1}^\calN \tau_n |\calA_n(\lambda)|,
  \]
  where
  \[
    \calA_n(\lambda) \coloneqq \left\{ x \in \Omega : |\GRAD w_h^n(x)| > \lambda \right\}.
  \]

  We now let $k>0$, to be specified later, and define
  \[
    \calB_n(k) \coloneqq \left\{ T \in \Triang : \exists y \in T  \ |w_h^n(y)| > k \right\}.
  \]
  Since
  \[
    \calA_n(\lambda) = \left( \calA_n(\lambda) \bigcap \cup \calB_n(k) \right) \bigsqcup \left\{ x \notin \cup \calB_n(k) : |\GRAD w_h^n (x) | > \lambda \right\},
  \]
  we have
  \[
    |\calA_n(\lambda)| \leq | \cup \calB_n(k)| + \left| \left\{ x \notin \cup \calB_n(k) : |\GRAD w_h^n (x) | > \lambda \right\} \right| = |\mathrm{I}| + |\mathrm{II}|.
  \]
  We estimate the measure of each set separately.
  
  First we note that, if $T \notin \calB_n(k)$, we have that $|w_h^n(y)| \leq k$ for all $y \in T$. Therefore, for every $x \in T$,
  \[
    \Trunc_k w_h^n(x) = w_h^n(x), \quad \calL_h \Trunc_k w_h^n(x) = \calL_h w_h^n(x) = w_h^n(x) , \quad \GRAD \calL_h \Trunc_k w_h^n(x) = \GRAD w_h^n(x).
  \]
  This, in turn, implies that
  \[
    |\mathrm{II}| \leq \frac1{\lambda^2} \int_{\mathrm{II}} |\GRAD w_h^n|^2 \diff x = \frac1{\lambda^2} \int_{\mathrm{II}} |\GRAD \calL_h \Trunc_k w_h^n|^2 \diff x \leq \frac1{\lambda^2} \int_\Omega |\GRAD \calL_h \Trunc_k w_h^n|^2 \diff x.
  \]

  The estimate of $|\mathrm{I}|$ is more involved. For definiteness we present the argument in the case $d \geq 3$. The arithmancy regarding integrability indices can be easily adjusted for $d=2$. To begin, we define
  \begin{equation}\label{r-definition}
    r = \frac{2(d+1)}d < s,
  \end{equation}
  where we recall that $s$ is defined in \eqref{s-definition}.
  Observe now that, using \cref{lem:Girault},
  \begin{align*}
    |\mathrm{I}| &\leq \sum_{T \in \calB_n(k)} |T| \lesssim \sum_{T \in \calB_n(k)} |S_T| \leq \frac{2^r}{k^r} \sum_{T \in \calB_n(k)}  \int_{S_T} |\calL_h \Trunc_k w_h^n|^r \diff x 
    \\
    &\leq \frac{2^r}{k^r} \int_\Omega |\calL_h \Trunc_k w_h^n|^r \diff x = \frac{2^r}{k^r} \| \calL_h \Trunc_k w_h^n \|_{L^r(\Omega)}^r.
  \end{align*}
  We then apply a well-known interpolation inequality, \cite[Proposition 6.10]{MR1681462}, and \eqref{eq:ScaryEstimate} to assert that
  \[
    |\mathrm{I}| \lesssim \frac{2^r}{k^r} \| \calL_h \Trunc_k w_h^n \|_{L^1(\Omega)}^{\theta r} \| \calL_h \Trunc_k w_h^n \|_{L^s(\Omega)}^{(1-\theta) r} \lesssim \frac{\calM}{k^r} \| \calL_h \Trunc_k w_h^n \|_{L^s(\Omega)}^{(1-\theta) r},
  \]
  where
  \[
    \frac1r = \theta + \frac{1-\theta}s, \qquad \calM \coloneqq \max\left\{ \left( F+U + \frac12|\Omega| \right)^{\theta r}, 1\right\}.
  \]
  Next, we invoke the Sobolev embedding theorem to realize that
  \begin{equation}
  \label{eq:MeasBnk}
    |\mathrm{I}| \lesssim \frac{\calM}{k^r} \| \GRAD \calL_h\Trunc_k w_h^n \|_\Ldeux^{(1-\theta)r}.
  \end{equation}
  Notice that a simple computation reveals that $(1-\theta)r = 2$ and $\theta r = \tfrac2d$. 
  
  We now use these estimates to obtain that, for $\lambda>0$ and $k>0$,
  \[
    |\calA(\lambda)| \lesssim \left[ \frac{\calM}{k^r}  + \frac1{\lambda^2} \right] \sum_{n=1}^\calN \tau_n \| \GRAD \calL_h\Trunc_k w_h^n \|_\Ldeux^2 %\leq 
    \lesssim \calM \left[ \frac1{k^r}  + \frac1{\lambda^2} \right]  k(F+U),
  \]
  where we also used \cref{eq:MainEst}. Up to this point $k>0$ was arbitrary, we may then set $k = \lambda^{2/r}$ to obtain
  \[
|\calA(\lambda)| \lesssim \calM \lambda^{2/r-2}( F+U ).
  \]
  Observe now that
  \[
    2-\frac2r = \barq,
  \]
  where we recall that $\barq$ is defined in \eqref{qbar-definition}.
  Consequently,
  \[
    \| \GRAD w_h^\dt \|_{\bfL^{\barq,\infty}(Q_T)}^\barq = \sup_{\lambda>0} \lambda^\barq |\calA(\lambda)| 
    \lesssim \calM ( F + U ),
  \]
  as we had intended to show.

  Finally, estimate \cref{eq:LsWeakSpaceTime} is essentially already proved. Indeed, we let $k>0$ be arbitrary and observe that
  \[
    \calC_n(k) \coloneqq \left\{ x \in \Omega : |w_h^n(x)| > k \right\} \subset \cup \calB_n(k).
  \]
  Estimate \cref{eq:MeasBnk} together with \cref{eq:MainEst} then imply that
  \[
  \sum_{n=1}^\calN \tau_n |\calC_n(k) | \lesssim \frac{\calM}{k^r} k(F + U) = \calM (F + U) k^{1-r} .
  \]
  Upon observing that $r-1 = \tfrac{d+2}d$ we then realize that
  \begin{align*}
    \| w_h^\dt \|_{L^{(d+2)/d,\infty}(Q_T)}^{(d+2)/d} &= \sup_{k>0} k^{(d+2)/d} \sum_{n=1}^\calN \tau_n |\calC_n(k) | 
    \lesssim   \calM (F + U).
  \end{align*}
  
  All the estimates have been obtained, and this proves the result.
\end{proof}

\begin{remark}[extension to $p \neq 2$]
The proof of this last result, without much effort, can be easily generalized as follows. If $p \in (2-1/d,d]$ and
\[
  \left\|  \calL_h\Theta_k( w_h^\dt ) \right\|_{L^\infty(0,T;L^1(\Omega))} + \int_0^T \int_\Omega |\GRAD \calL_h \Trunc_kw_h^\dt|^p \diff x \diff t \leq  k (F+ U),
\]
then, for
\[
  \widetilde{q} \coloneqq \frac{p(d+1)-d}{d+1},
\]
we have
\[
  \| \GRAD w_h^\dt \|_{\bfL^{\widetilde{q},\infty}(Q_T)}^{\widetilde{q}} \lesssim F+ U.
\]
This result is of interest by itself, but it is not needed in our analysis below, hence we will not dwell on it.
\end{remark}

\section{The numerical scheme and its analysis}
\label{sec:TheScheme}

We have now reached the point where we are able to present our numerical method. In essence, we employ the mass-lumped implicit Euler scheme. We begin by discretizing the right hand side in time. Namely, we construct $f^\dt = \{f^n\}_{n=1}^\calN \subset L^1(\Omega)$ as
\[
  f^n = \frac1{\tau_n}\int_{I_n} f \diff t.
\]

The numerical scheme constructs $u_h^\dt = \{ u_h^n \}_{n=0}^\calN \in \frakX_h^\dt$ as follows. Let $u_h^0 = \calP_h u_0$. Then, for $n \geq 1$, we compute $u_h^n \in \Fespace$ as the solution to
\begin{equation}
\label{eq:TheScheme}
  \left( \frac{u_h^n - u_h^{n-1} }{\tau_n}, v_h \right)_{L^2_h} + \int_\Omega \GRAD u_h^n \cdot \GRAD v_h \diff x = \int_\Omega f^n v_h \diff x, \qquad \forall v_h \in \Fespace.
\end{equation}

Existence and uniqueness of discrete solutions is trivially achieved. The main issue that motivates our work is to obtain enough a priori estimates so that a family of discrete solutions $\{u_h^\dt\}_{h>0,\dt>0}$ converges, in a suitable sense, to the renormalized solution to \cref{eq:TheEqn}. To achieve this we, first of all, recast our scheme as a perturbed version of the standard $dG(0)$-in-time scheme. Namely, we define $\calB_h^\dt : \frakX_h^\dt \times \frakY_h^\dt \to \Real$ as
\begin{equation}
\label{eq:DefOfB}
  \begin{aligned}
      \calB_h^\dt(v_h^\dt,w_h^\dt) &\coloneqq \left( v_h^\dt(0) , w_h^\dt(0) \right)_{\Ldeux} + \int_0^T\int_\Omega \GRAD v_h^\dt \cdot \GRAD w_h^\dt \diff x \diff t \\
        &+ \sum_{n=1}^\calN \left( \jump{v_h^\dt}_{n-1}, w_h^n \right)_{L^2_h},
  \end{aligned}
\end{equation}
and $\calF_h^\dt : \frakY_h^\dt \to \Real$ as
\begin{equation}
\label{eq:DefOfF}
  \calF_h^\dt( w_h^\dt ) \coloneqq \int_\Omega u_0 w_h^\dt(0) \diff x + \int_{Q_T} f w_h^\dt \diff x \diff t.
\end{equation}
Notice that, if $w_h^\dt = \{w_h^n\}_{n=0}^\calN$, then we may rewrite the previous expression as
\[
  \calF_h^\dt(w_h^\dt) = \int_\Omega \calP_h u_0 w_h^0 \diff x + \sum_{n=1}^\calN \tau_n \int_\Omega f^n w_h^n \diff x.
\]
In summary, we may rewrite \cref{eq:TheScheme} as: Find $u_h^\dt \in \frakX_h^\dt$ such that
\begin{equation}
\label{eq:DgTimeScheme}
  \calB_h^\dt( u_h^\dt, v_h^\dt) = \calF_h^\dt(v_h^\dt), \qquad \forall v_h^\dt \in \frakY_h^\dt.
\end{equation}
The equivalence is standard, and the only difference with the canonical $dG(0)$-in-time scheme lies in the mass-lumping of the jump terms.

\subsection{Conditional inf-sup stability of the mass lumped implicit Euler scheme}
\label{sub:InfSup}

Our goal here will be to prove an inf-sup condition for the bilinear form $\calB_h^\dt$. For the standard implicit Euler scheme this result can be found in \cite[Lemma 71.18]{MR4248810}; see also \cite{MR1921920,MR4246874}. To our knowledge this, simple yet useful, result is not available in the literature and may be of its own interest.

\begin{theorem}[inf-sup]
\label{thm:InfSup}
  Assume that the discretization parameters satisfy the following reverse CFL condition
  \begin{equation}
  \label{eq:ReverseCFL}
    h^2 \leq \frac1{4C_Q^2} \min_{n=1}^\calN \tau_n,
  \end{equation}
  where $C_Q$ is the constant in \cref{Eq:error-ML}. Then, we have
  \[
    \frac12 \| v_h^\dt \|_{\frakX_h^\dt} \leq \sup_{w_h^\dt \in \frakY_h^\dt} \frac{ \calB_h^\dt(v_h^\dt,w_h^\dt) }{ \| w_h^\dt \|_{\frakY_h^\dt} }.
  \]
\end{theorem}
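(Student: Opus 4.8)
The plan is to treat the mass-lumped form $\calB_h^\dt$ as a perturbation of its genuine, $L^2$-consistent, counterpart, for which the sought stability is classical, and then to absorb the perturbation using the reverse CFL condition \cref{eq:ReverseCFL}. Concretely, I would introduce the bilinear form $\widetilde{\calB}_h^\dt : \frakX_h^\dt \times \frakY_h^\dt \to \Real$ defined exactly as in \cref{eq:DefOfB}, but with the mass-lumped inner product in the jump terms replaced by the genuine $L^2$-inner product,
\[
  \widetilde{\calB}_h^\dt(v_h^\dt,w_h^\dt) \coloneqq \left( v_h^\dt(0),w_h^\dt(0)\right)_{\Ldeux} + \int_0^T\int_\Omega \GRAD v_h^\dt\cdot\GRAD w_h^\dt \diff x\diff t + \sum_{n=1}^\calN \int_\Omega \jump{v_h^\dt}_{n-1} w_h^n \diff x .
\]
This is precisely the standard $dG(0)$/implicit Euler form, so \cite[Lemma 71.18]{MR4248810} applies and yields the inf-sup bound \emph{with stability constant equal to one}, i.e. $\| v_h^\dt \|_{\frakX_h^\dt} \leq \sup_{w_h^\dt} \widetilde{\calB}_h^\dt(v_h^\dt,w_h^\dt)/\| w_h^\dt \|_{\frakY_h^\dt}$. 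The crucial structural observation is that the first term of $\calB_h^\dt$ already carries the genuine $L^2$-inner product, so the discrepancy $\calE \coloneqq \calB_h^\dt - \widetilde{\calB}_h^\dt$ involves \emph{only} the jump terms; there is no lumping error at $t=0$ to worry about.

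The heart of the argument is a sharp estimate for the perturbation. Writing $\calE(v_h^\dt,w_h^\dt) = \sum_{n=1}^\calN \big[ (\jump{v_h^\dt}_{n-1},w_h^n)_{L^2_h} - \int_\Omega \jump{v_h^\dt}_{n-1} w_h^n \diff x \big]$, I would apply the mass-lumping error bound \cref{Eq:error-ML} termwise, followed by the Cauchy--Schwarz inequality in $n$, to get
\[
  |\calE(v_h^\dt,w_h^\dt)| \leq C_Q h \left( \sum_{n=1}^\calN \| \jump{v_h^\dt}_{n-1} \|_{\Ldeux}^2 \right)^{1/2} \left( \sum_{n=1}^\calN \| \GRAD w_h^n \|_{\Ldeuxd}^2 \right)^{1/2}.
\]
The first factor is bounded by $\| v_h^\dt \|_{\frakX_h^\dt}$, since the jump sum is one of the terms defining that norm. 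For the second factor I would convert the \emph{unweighted} gradient sum into the $\frakY_h^\dt$-norm at the cost of the smallest time step: since $\tau_n \geq \min_{m=1}^\calN \tau_m$,
\[
  \sum_{n=1}^\calN \| \GRAD w_h^n \|_{\Ldeuxd}^2 \leq \frac1{\min_{m=1}^\calN \tau_m} \sum_{n=1}^\calN \tau_n \| \GRAD w_h^n \|_{\Ldeuxd}^2 \leq \frac1{\min_{m=1}^\calN \tau_m} \| w_h^\dt \|_{L^2(0,T;\Hunz)}^2 \leq \frac{ \| w_h^\dt \|_{\frakY_h^\dt}^2 }{ \min_{m=1}^\calN \tau_m }.
\]
Combining the two factors gives $|\calE(v_h^\dt,w_h^\dt)| \leq C_Q h (\min_m \tau_m)^{-1/2} \| v_h^\dt \|_{\frakX_h^\dt} \| w_h^\dt \|_{\frakY_h^\dt}$, and the reverse CFL condition \cref{eq:ReverseCFL} is exactly tuned so that $C_Q h (\min_m \tau_m)^{-1/2} \leq \tfrac12$, whence $|\calE(v_h^\dt,w_h^\dt)| \leq \tfrac12 \| v_h^\dt \|_{\frakX_h^\dt} \| w_h^\dt \|_{\frakY_h^\dt}$.

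To conclude, I would fix $v_h^\dt$ and, for arbitrary $\varepsilon>0$, choose $w_h^\dt$ with $\widetilde{\calB}_h^\dt(v_h^\dt,w_h^\dt) \geq (1-\varepsilon)\| v_h^\dt \|_{\frakX_h^\dt} \| w_h^\dt \|_{\frakY_h^\dt}$ (possible by the unit inf-sup constant for $\widetilde{\calB}_h^\dt$). Then $\calB_h^\dt(v_h^\dt,w_h^\dt) = \widetilde{\calB}_h^\dt(v_h^\dt,w_h^\dt) + \calE(v_h^\dt,w_h^\dt) \geq (\tfrac12-\varepsilon)\| v_h^\dt \|_{\frakX_h^\dt} \| w_h^\dt \|_{\frakY_h^\dt}$, and letting $\varepsilon \to 0$ gives the claim with the stated constant $\tfrac12$. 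I expect the main obstacle to be the perturbation estimate itself: the lumping bound \cref{Eq:error-ML} necessarily pairs the $L^2$-norm of a jump with the $H^1$-seminorm of the test function, which produces the time-step--unweighted gradient sum $\sum_n \| \GRAD w_h^n \|_{\Ldeuxd}^2$; this is stronger than the $\frakY_h^\dt$-norm by a factor $(\min_m \tau_m)^{-1}$, and recognizing that precisely the scaling in the reverse CFL condition compensates this loss (and that the resulting perturbation bound $\tfrac12$ combines with the unit constant of the unlumped form to produce exactly $\tfrac12$) is the decisive point of the proof.
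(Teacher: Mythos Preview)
Your proposal is correct and follows essentially the same route as the paper: introduce the unlumped bilinear form (the paper calls it $\calA_h^\dt$), invoke its unit inf-sup constant from \cite[Lemma~71.18]{MR4248810}, bound the lumping discrepancy termwise via \cref{Eq:error-ML} and Cauchy--Schwarz to obtain the factor $C_Q h(\min_n\tau_n)^{-1/2}$, and use \cref{eq:ReverseCFL} to make this at most $\tfrac12$. The only cosmetic difference is that the paper writes the conclusion as $\|v_h^\dt\|_{\frakX_h^\dt}\le \sup \calB_h^\dt/\|w_h^\dt\|_{\frakY_h^\dt}+\sup|\calE|/\|w_h^\dt\|_{\frakY_h^\dt}$ and absorbs directly, whereas you pick a near-optimal test function and let $\varepsilon\to0$; the arguments are equivalent.
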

\begin{proof}
  For the purposes of this proof define $\calA_h^\dt : \frakX_h^\dt \times \frakY_h^\dt \to \Real$ as
  \[
    \calA_h^\dt(v_h^\dt,w_h^\dt) \coloneqq \left( v_h^\dt(0) , w_h^\dt(0) \right)_{\Ldeux} + \int_0^T\int_\Omega \GRAD v_h^\dt \cdot \GRAD w_h^\dt \diff x \diff t
    + \sum_{n=1}^\calN \int_\Omega \jump{v_h^\dt}_{n-1} w_h^n \diff x.
  \]
  According to \cite[Lemma 71.18]{MR4248810} this bilinear form satisfies a uniform inf-sup condition. Namely, for every $h>0$ and all $\dt>0$
  \[
    \| v_h^\dt \|_{\frakX_h^\dt} \leq \sup_{w_h^\dt \in \frakY_h^\dt} \frac{ \calA_h^\dt(v_h^\dt,w_h^\dt) }{ \| w_h^\dt \|_{\frakY_h^\dt} }.
  \]
  Clearly then
  \[
    \| v_h^\dt \|_{\frakX_h^\dt} \leq \sup_{w_h^\dt \in \frakY_h^\dt} \frac{ \calB_h^\dt(v_h^\dt,w_h^\dt) }{ \| w_h^\dt \|_{\frakY_h^\dt} } + \sup_{w_h \in\frakY_h^\dt} \frac{  \calC_h^\dt(v_h^\dt,w_h^\dt) }{ \| w_h^\dt \|_{\frakY_h^\dt} },
  \]
  where
  \[
    \calC_h^\dt(v_h^\dt,w_h^\dt) \coloneqq \left| \sum_{n=1}^\calN \int_\Omega \left( \jump{v_h^\dt}_{n-1} w_h^n - \calL_h( \jump{v_h^\dt}_{n-1} w_h^n)\right) \diff x \right|.
  \]
  Using \cref{Eq:error-ML} we obtain then that
  \begin{align*}
    \calC_h^\dt(v_h^\dt,w_h^\dt) &\leq C_Q h  \sum_{n=1}^\calN \| \jump{v_h^\dt}_{n-1} \|_\Ldeux \| \GRAD w_h^n \|_\Ldeuxd
    \\
    &\leq C_Q \frac{h}{\sqrt{ \min_{n=1}^\calN \tau_n } } \left( \sum_{n=1}^\calN \| \jump{v_h^\dt}_{n-1} \|_\Ldeux^2 \right)^{1/2} \left( \sum_{n=1}^\calN \tau_n \| \GRAD w_h^n \|_\Ldeuxd^2 \right)^{1/2}
    \\
    &\leq C_Q \frac{h}{\sqrt{ \min_{n=1}^\calN \tau_n } } \| v_h^\dt \|_{\frakX_h^\dt} \| w_h^\dt\|_{\frakY_h^\dt}.
  \end{align*}
  Thus, under the assumed inverse CFL condition, the claimed inf-sup condition holds.
\end{proof}

\subsection{A priori estimates}
\label{sub:Apriori}

We now present the main a priori estimate that we will use to assert convergence of our numerical scheme.

\begin{theorem}[a priori estimates]
\label{thm:APrioriEst}
  Let $\{u_h^\dt \in \frakX_h^\dt\}_{h>0,\dt>0}$ denote the family of solutions to \cref{eq:TheScheme}. Then, this family satisfies \cref{eq:MainEst} with
  \[
    F = \| f \|_{L^1(Q_T)} , \qquad U = C_1 C_\calP\|u_0 \|_{L^1(\Omega)}.
  \]
\end{theorem}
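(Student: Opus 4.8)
The plan is to obtain the cumulative energy estimate \cref{eq:MainEst} by testing the scheme \cref{eq:TheScheme}, at each time level $n$, with the admissible function $v_h = \calL_h \Trunc_k u_h^n \in \Fespace$, and then telescoping in time. After multiplying \cref{eq:TheScheme} by $\tau_n$, three contributions must be controlled: the mass-lumped time term, the diffusion term, and the forcing. The diffusion term is handled immediately by \cref{assume:DMP}, which gives, pointwise a.e.\ in $\Omega$, that $\GRAD u_h^n \cdot \GRAD \calL_h \Trunc_k u_h^n \geq |\GRAD \calL_h \Trunc_k u_h^n|^2$, hence
\[
  \tau_n \int_\Omega \GRAD u_h^n \cdot \GRAD \calL_h \Trunc_k u_h^n \diff x \geq \tau_n \int_\Omega |\GRAD \calL_h \Trunc_k u_h^n|^2 \diff x .
\]
For the forcing I would use that $\calL_h \Trunc_k u_h^n$ is a convex combination of its nodal values $\Trunc_k u_h^n(\vertex) \in [-k,k]$, so $|\calL_h \Trunc_k u_h^n| \leq k$ pointwise and, recalling $f^n = \tfrac1{\tau_n}\int_{I_n} f \diff t$,
\[
  \left| \tau_n \int_\Omega f^n \calL_h \Trunc_k u_h^n \diff x \right| \leq k \tau_n \| f^n \|_{L^1(\Omega)} \leq k \int_{I_n} \| f \|_{L^1(\Omega)} \diff t .
\]

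The heart of the argument, and the step I expect to be the main obstacle, is the time term; this is precisely where mass lumping is essential rather than merely convenient. Because $(\cdot,\cdot)_{L^2_h}$ is diagonal in the nodal basis, writing $U_\vertex^n = u_h^n(\vertex)$ and $m_\vertex = \int_\Omega \phi_\vertex \diff x > 0$ one has
\[
  \left( u_h^n - u_h^{n-1}, \calL_h \Trunc_k u_h^n \right)_{L^2_h} = \sum_{\vertex \in \VertInt} m_\vertex \, (U_\vertex^n - U_\vertex^{n-1}) \, \Trunc_k(U_\vertex^n) .
\]
Since $\Theta_k$ is convex with $\Theta_k' = \Trunc_k$, the scalar subgradient inequality $\Trunc_k(b)(b-a) \geq \Theta_k(b) - \Theta_k(a)$ applies at each vertex; multiplying by $m_\vertex > 0$ and summing produces the discrete chain rule
\[
  \left( u_h^n - u_h^{n-1}, \calL_h \Trunc_k u_h^n \right)_{L^2_h} \geq \int_\Omega \calL_h \Theta_k(u_h^n) \diff x - \int_\Omega \calL_h \Theta_k(u_h^{n-1}) \diff x ,
\]
which is exactly the telescoping structure we need. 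With the standard (consistent) mass matrix the inner product would not diagonalize, and this nodewise convexity argument would break down.

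Collecting the three estimates gives, for each $n$,
\[
  \int_\Omega \calL_h \Theta_k(u_h^n) \diff x - \int_\Omega \calL_h \Theta_k(u_h^{n-1}) \diff x + \tau_n \int_\Omega |\GRAD \calL_h \Trunc_k u_h^n|^2 \diff x \leq k \int_{I_n} \| f \|_{L^1(\Omega)} \diff t .
\]
Summing over $n = 1, \ldots, m$ telescopes the $\Theta_k$ terms, and the forcing contributions sum to at most $k\|f\|_{L^1(Q_T)} = kF$. The initial term I would bound using \cref{lem:ThetakVSmassLump} together with the $L^1$-stability \cref{eq:L1stabL2proj} of $\calP_h$, since $u_h^0 = \calP_h u_0$:
\[
  \int_\Omega \calL_h \Theta_k(u_h^0) \diff x = \| \Theta_k(\calP_h u_0) \|_{L^1_h} \leq C_1 k \| \calP_h u_0 \|_{L^1(\Omega)} \leq C_1 C_\calP k \| u_0 \|_{L^1(\Omega)} = kU .
\]

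Putting these together yields, for every $m$, the cumulative bound
\[
  \int_\Omega \calL_h \Theta_k(u_h^m) \diff x + \sum_{n=1}^m \tau_n \int_\Omega |\GRAD \calL_h \Trunc_k u_h^n|^2 \diff x \leq k\left( F + U \right) .
\]
Because $u_h^\dt$ is piecewise constant in time, this identifies the $L^\infty(0,T;L^1(\Omega))$-norm of $\calL_h \Theta_k(u_h^\dt)$ with $\max_m \int_\Omega \calL_h \Theta_k(u_h^m) \diff x$ and the space-time gradient integral with $\sum_n \tau_n \int_\Omega |\GRAD \calL_h \Trunc_k u_h^n|^2 \diff x$, each controlled by $k(F+U)$, which delivers \cref{eq:MainEst} with $F = \| f \|_{L^1(Q_T)}$ and $U = C_1 C_\calP \| u_0 \|_{L^1(\Omega)}$. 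The remaining verifications (admissibility of the test function, the pointwise nodal bound $|\calL_h \Trunc_k u_h^n| \leq k$, and the elementary forcing estimate) are routine; the only genuinely structural ingredients are the DMP assumption and the mass-lumped convexity identity above.
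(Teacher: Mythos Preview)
Your proposal is correct and follows essentially the same argument as the paper: test \cref{eq:TheScheme} with $\tau_n \calL_h \Trunc_k u_h^n$, use \cref{assume:DMP} for the diffusion term, the bound $|\calL_h \Trunc_k u_h^n|\leq k$ for the forcing, the nodewise convexity inequality $\Trunc_k(b)(b-a)\geq \Theta_k(b)-\Theta_k(a)$ (enabled by mass lumping) for the time term, then telescope and close with \cref{lem:ThetakVSmassLump} and \cref{eq:L1stabL2proj}. The only cosmetic difference is that the paper writes the mass-lumped term as $\left(u_h^n-u_h^{n-1},\Trunc_k u_h^n\right)_{L^2_h}$ rather than $\left(u_h^n-u_h^{n-1},\calL_h\Trunc_k u_h^n\right)_{L^2_h}$; since the mass-lumped inner product depends only on nodal values, these are identical.
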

\begin{proof}
  Fix $k>0$. Set, in \cref{eq:TheScheme}, $v_h = \tau_n \calL_h \Trunc_k u_h^n$. \Cref{assume:DMP} then yields
  \begin{equation}
  \label{eq:FirstEqApriori}
    \left(  u_h^n - u_h^{n-1}, \Trunc_ku_h^n \right)_{L^2_h} + \tau_n \int_\Omega |\GRAD \calL_h \Trunc_k u_h^n|^2 \diff x \leq \tau_n k \| f^n \|_{L^1(\Omega)}.
  \end{equation}
  
  Recall now that the mass lumped inner product can be rewritten as
  \[
    \left(  u_h^n - u_h^{n-1}, \Trunc_ku_h^n \right)_{L^2_h} = \sum_{\vertex \in \VertInt} \Trunc_ku_h^n(\vertex) \left( u_h^n(\vertex) - u_h^{n-1}(\vertex) \right) \int_\Omega \phi_\vertex \diff x.
  \]
  Next, the convexity of $\Theta_k$ and the fact that $\Theta_k'=\Trunc_k$ imply that, for every $\vertex \in \VertInt$, we have
  \[
    \Theta_k( u_h^n(\vertex) ) - \Theta_k( u_h^{n-1}(\vertex) ) \leq \Trunc_ku_h^n(\vertex) \left( u_h^n(\vertex) - u_h^{n-1}(\vertex) \right).
  \]
  In other words,
  \[
    \| \Theta_k( u_h^n ) \|_{L^1_h} - \| \Theta_k( u_h^{n-1} ) \|_{L^1_h} \leq \left(  u_h^n - u_h^{n-1}, \Trunc_ku_h^n \right)_{L^2_h}.
  \]
  Substitute this in \cref{eq:FirstEqApriori},   and add over $n$ to conclude that
  \begin{multline*}
    \| \calL_h \Theta_k(u_h^\dt) \|_{L^\infty(0,T;L^1(\Omega))} + \int_0^T \int_\Omega |\GRAD\calL_h \Trunc_k u_h^\dt |^2 \diff x \diff t \leq \\
      k \sum_{n=1}^\calN \tau_n \| f^n \|_{L^1(\Omega)} + \| \Theta_k( u_h^0 ) \|_{L^1_h}.
  \end{multline*}

  We finally invoke \cref{lem:ThetakVSmassLump} and \cref{eq:L1stabL2proj} to conclude
  \[
    \| \Theta_k( u_h^0 ) \|_{L^1_h}  \leq C_1 k \| u_h^0 \|_{L^1(\Omega)} = C_1 k \| \calP_h u_0 \|_{L^1(\Omega)} \leq C_1 C_\calP k \| u_0 \|_{L^1(\Omega)},
  \]
  which gives the value of $U$. Finally, using the definition of the discrete right hand side
  \[
    \sum_{n=1}^\calN \tau_n \int_\Omega | f^n | \diff x \leq \sum_{n=1}^\calN \int_\Omega \int_{I_n} |f| \diff t \diff x = \| f \|_{L^1(Q_T)}.
  \] 
  This defines the value of $F$ and finishes the proof.
\end{proof}

\subsection{Convergence}
\label{sub:Convergence}

We are now in position to state and prove the convergence of our numerical scheme.

\begin{theorem}[convergence]
  Suppose that $\{\Triang\}_{h>0}$ satisfies \cref{assume:DMP} and that the discretization parameters satisfy \eqref{eq:ReverseCFL}. Then, as $(h,\dt)\to (0,0)$, we have that, for every $q < \barq$,
  \[
    \| u - u_h^\dt \|_{L^\infty(0,T;L^1(\Omega))} + \| u - u_h^\dt \|_{L^q(0,T;W^{1,q}_0(\Omega))} \to 0,
  \]
  where $u$ is the renormalized solution to \eqref{eq:TheEqn}.
\end{theorem}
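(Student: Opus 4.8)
The plan is to avoid passing to the limit directly in the renormalized formulation \eqref{eq:Formulation}; instead I would regularize the data and exploit both the \emph{linearity} of the scheme \eqref{eq:TheScheme} and the already established stability of renormalized solutions. Fix $\epsilon>0$ and choose smooth data $(u_0^\epsilon,f^\epsilon) \in (\Hunz \cap \Linf) \times L^2(Q_T)$ with $\|u_0 - u_0^\epsilon\|_{L^1(\Omega)} + \|f - f^\epsilon\|_{L^1(Q_T)} < \epsilon$. Let $u^\epsilon$ be the weak solution of \eqref{eq:TheEqn} with this data, which by the consistency part of \cref{thm:Renormalized} coincides with its renormalized solution, and let $u_h^{\dt,\epsilon} \in \frakX_h^\dt$ be the output of \eqref{eq:TheScheme} fed with $(u_0^\epsilon,f^\epsilon)$. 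I then split, in each of the two target norms,
\[
  \| u - u_h^\dt \| \leq \| u - u^\epsilon \| + \| u^\epsilon - u_h^{\dt,\epsilon} \| + \| u_h^{\dt,\epsilon} - u_h^\dt \|,
\]
and control the three contributions by three distinct mechanisms.

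The first term is purely a PDE statement: by the stability and continuous dependence part of \cref{thm:Renormalized}, $u^\epsilon \to u$ in $L^\infty(0,T;L^1(\Omega)) \cap L^q(0,T;W^{1,q}_0(\Omega))$ as $\epsilon \to 0$, so it is bounded by a quantity vanishing with $\epsilon$, uniformly in $(h,\dt)$. For the third term I would use that, by linearity, $e_h^\dt \coloneqq u_h^{\dt,\epsilon} - u_h^\dt$ is the discrete solution associated with the data $(u_0^\epsilon - u_0, f^\epsilon - f)$. Applying \cref{thm:APrioriEst} and \cref{thm:Truncations} to $e_h^\dt$ bounds $\|\GRAD e_h^\dt\|_{\bfL^{\barq,\infty}(Q_T)}$, and hence, through the embedding $L^{\barq,\infty}(Q_T) \hookrightarrow L^q(Q_T)$ for $q<\barq$, the $L^q(0,T;W^{1,q}_0(\Omega))$ norm of $e_h^\dt$ by a constant multiple of $\epsilon^{1/\barq}$, uniformly in $(h,\dt)$. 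The $L^\infty(0,T;L^1(\Omega))$ control of $e_h^\dt$ does \emph{not} follow from \eqref{eq:LinftytL1x}, since the additive $\tfrac12|\Omega|$ there does not vanish with $\epsilon$; instead I would establish a discrete $L^1$-contraction directly, testing the equation for $e_h^n$ with $\calL_h\Trunc_k e_h^n$, invoking \cref{assume:DMP} to control the sign of the diffusion term and the convexity of $\Theta_k$ for the mass-lumped time term, and then letting $k\to 0$ (using $\tfrac1k\Theta_k \to |\cdot|$ and $\tfrac1k\Trunc_k \to \operatorname{sign}$, together with \eqref{eq:L1stabL2proj} for the initial datum). This yields $\|e_h^\dt\|_{L^\infty(0,T;L^1(\Omega))} \lesssim \|u_0 - u_0^\epsilon\|_{L^1(\Omega)} + \|f - f^\epsilon\|_{L^1(Q_T)} \lesssim \epsilon$, again uniformly in $(h,\dt)$.

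The second term is the only genuinely numerical ingredient, and it is where I expect the main difficulty. For the \emph{fixed}, smooth data $(u_0^\epsilon,f^\epsilon)$ one must show that the mass-lumped implicit Euler scheme converges to the weak solution $u^\epsilon$. Here the conditional inf-sup stability of \cref{thm:InfSup}, valid under the reverse CFL condition \eqref{eq:ReverseCFL}, is the key tool: combined with a consistency estimate for \eqref{eq:DgTimeScheme} — in which the only nonstandard contribution is the mass-lumping perturbation, controlled by \eqref{Eq:error-ML} and shown to vanish under \eqref{eq:ReverseCFL} — it delivers convergence of $u_h^{\dt,\epsilon}$ to $u^\epsilon$ in the $\frakX_h^\dt$ norm. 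Since that norm dominates $L^2(0,T;\Hunz) \hookrightarrow L^q(0,T;W^{1,q}_0(\Omega))$ (as $q<\barq<2$ and $|\Omega|<\infty$), and since a standard discrete energy argument available for the smooth data gives convergence in $L^\infty(0,T;\Ldeux)$, which embeds continuously into $L^\infty(0,T;L^1(\Omega))$, one obtains $\|u^\epsilon - u_h^{\dt,\epsilon}\| \to 0$ as $(h,\dt)\to 0$ for each fixed $\epsilon$. The subtlety is that this rate degrades as $\epsilon \to 0$, so the three terms must be combined in the correct order: first take $\limsup_{(h,\dt)\to 0}$, which annihilates the second term and leaves the first and third bounded by $C\epsilon$, and only then let $\epsilon \to 0$. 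Because the limit $u$ is independent of $\epsilon$ and of any subsequence, this proves convergence of the entire family, with no appeal to uniqueness beyond that already contained in \cref{thm:Renormalized}.
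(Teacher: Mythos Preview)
Your proposal is correct and follows essentially the same three–term splitting as the paper: regularize the data, control $\|u-u^\epsilon\|$ by the stability part of \cref{thm:Renormalized}, control $\|u^\epsilon-u_h^{\dt,\epsilon}\|$ for fixed smooth data via the conditional inf-sup of \cref{thm:InfSup} and a C\'ea-type argument, and control $\|u_h^{\dt,\epsilon}-u_h^\dt\|$ by linearity together with \cref{thm:APrioriEst} and \cref{thm:Truncations}.

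The one place you depart from the paper is the $L^\infty(0,T;L^1(\Omega))$ bound on the discrete error $e_h^\dt$. The paper fixes $k=1$, uses the $\Theta_1$-estimate from \cref{thm:APrioriEst}, splits the interior vertices into $\{|e_{m,h}^n(\vertex)|\le 1\}$ and $\{|e_{m,h}^n(\vertex)|>1\}$, and applies Cauchy--Schwarz on the first set; this yields a bound of the form $\sqrt{|\Omega|(F+U)}+(F+U)$. Your route---divide the estimate $\|\Theta_k(e_h^n)\|_{L^1_h}\le k(F+U)$ by $k$ and let $k\to 0$, using $\tfrac1k\Theta_k(s)\to|s|$ pointwise and the fact that for each fixed $(h,\dt)$ only finitely many vertex values are involved---is both simpler and sharper: it gives $\|e_h^n\|_{L^1_h}\le F+U$ directly, with no square-root term and no dependence on $|\Omega|$. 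Either estimate suffices for the convergence argument, since all that is needed is that the bound vanishes with $F+U$.
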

\begin{proof}
  Our method of proof draws inspiration from \cite[Theorem 3.2]{MR2266830}. Fix $\epsilon >0$. Let $\{(u_{0,m},f_m)\}_{m \in \Natural} \subset L^2(\Omega) \times L^2(Q_T)$ be a sequence such that, as $m \to \infty$,
  \[
    \| u_{0,m} - u_0 \|_{L^1(\Omega)} + \| f_m - f \|_{L^1(Q_T)} \to 0.
  \]
  
  Denote by $\{u_m\}_{m \in \Natural} \subset L^2(0,T;\Hunz) \cap H^1(0,T;\Hmun)$ the weak solutions to \cref{eq:TheEqn} with data $(u_{0,m}, f_m)$. The consistency of \cref{thm:Renormalized} shows that these are also renormalized solutions. Thus, the continuous dependence of renormalized solutions of \cref{thm:Renormalized} implies that there is $m_1 \in \Natural$ such that, for every $m \geq m_1$,
  \[
    \| u - u_m \|_{L^\infty(0,T;L^1(\Omega))} + \| \GRAD( u - u_m ) \|_{L^q(0,T;\bfL^q(\Omega))} < \frac\epsilon3.
  \]
  
  Let now, for $m\geq m_1$, $\{u_{m,h}^\dt\}_{h>0,\dt>0}$ denote the family of solutions to \eqref{eq:TheScheme} with data $(u_{0,m}, f_m)$. Since the discretization parameters are assumed to satisfy \cref{eq:ReverseCFL}, the inf-sup condition of \cref{thm:InfSup} holds. This immediately implies a C\'ea-type best approximation result, \ie
  \begin{multline*}
    \| u_m - u_{m,h}^\dt \|_{L^\infty(0,T;L^1(\Omega))} +  \| \GRAD( u_m - u_{m,h}^\dt) \|_{L^q(0,T;\bfL^q(\Omega))} \lesssim
    \\
    \| u_m - u_{m,h}^\dt \|_{L^\infty(0,T;L^2(\Omega))} + \| \GRAD( u_m - u_{m,h}^\dt )\|_{L^2(0,T;\bfL^2(\Omega))} \lesssim
    \\
    \inf_{w_h^\dt \in \frakX_h^\dt} \| \GRAD( u_m - w_h^\dt) \|_{\frakX_h^\dt},
  \end{multline*}
  where we also used that $q<\barq<2$ and the fact that the $\frakX_h^\dt$ norm controls the one in $L^\infty(0,T;\Ldeux)\cap L^2(0,T;\Hunz)$. Standard approximation properties of $\{\frakX_h^\dt\}_{h>0,\dt>0}$ can then be invoked to conclude that, for $h$ and $\dt$ sufficiently small, we have
  \[
    \| u_m - u_{m,h}^\dt \|_{L^\infty(0,T;L^1(\Omega))} +\| \GRAD( u_m - u_{m,h}^\dt) \|_{L^q(0,T;\bfL^q(\Omega))} < \frac\epsilon3.
  \]
  
  Next, by linearity, we realize that $e_{m,h}^\dt \coloneqq u_h^\dt - u_{m,h}^\dt$ solves \cref{eq:TheScheme} with data $(u_0-u_{0,m},f-f_m)$. The a priori estimate of \cref{thm:APrioriEst} then implies that the family $\{e_{m,h}^\dt\}_{m \in \Natural,h>0,\dt>0}$ satisfies \cref{eq:MainEst} with
  \[
    F = \| f-f_m \|_{L^1(Q_T)}, \qquad U =  C_1 C_\calP \| u_0 - u_{0,m} \|_{L^1(\Omega)}.
  \]
  In particular
  \begin{equation}
  \label{eq:AlmostLinfL1conv}
    \max_{n=1, \ldots,\calN} \int_\Omega \calL_h \Theta_1( e_{m,h}^n ) \diff x \leq \| f-f_m \|_{L^1(Q_T)} +  C_1 C_\calP \| u_0 - u_{0,m} \|_{L^1(\Omega)}.
  \end{equation}
  We may also invoke \cref{thm:Truncations} to conclude that, for $q<\barq$,
  \begin{equation}
  \label{eq:ConvLqW1q}
    \| \GRAD e_{m,h}^\dt \|_{L^q(0,T;\bfL^q(\Omega))} \lesssim \left[ \| f-f_m \|_{L^1(Q_T)} +  C_1 C_\calP \| u_0 - u_{0,m} \|_{L^1(\Omega)} \right]^{1/\barq}.
  \end{equation}
  This is almost what we need. All that is missing is a bound in $L^\infty(0,T;L^1(\Omega))$ for $\{e_{m,h}^\dt\}$ in terms of $F$ and $U$. Drawing inspiration from the proof of \cite[Claim 2]{MR1436364} we obtain it now. Let $n \in \{1,\ldots,\calN\}$ be arbitrary and we observe that
  \[
    \int_\Omega \calL_h \Theta_1( e_{m,h}^n ) \diff x = \sum_{\vertex \in \VertInt}  \Theta_1( e_{m,h}^n(\vertex) ) \int_\Omega \phi_\vertex \diff x.
  \]
  We split now the interior vertices into two disjoint sets:
  \[
    \VertInt(s,n) \coloneqq \left\{ \vertex \in \VertInt : |e_{m,h}^n(\vertex)| \leq 1 \right\}, \qquad \VertInt(b,n) \coloneqq \left\{ \vertex \in \VertInt : |e_{m,h}^n(\vertex)| > 1 \right\},
  \]
  and use that
  \[
    |s| \leq 1 \quad \implies \quad \Theta_1(s) = \frac{s^2}2, \qquad \qquad |s|>1 \quad \implies \quad \Theta_1(s) > \frac{|s|}2
  \]
  and \cref{eq:ErrEstMassLump} to estimate
  \[
    \frac12 \left[ \sum_{\vertex \in \VertInt(s,n) } |e_{m,h}^n(\vertex)|^2 \int_\Omega \phi_\vertex \diff x + \sum_{\vertex \in \VertInt(b,n) } |e_{m,h}^n(\vertex)| \int_\Omega \phi_\vertex \diff x \right] \leq \int_\Omega \calL_h \Theta_1( e_{m,h}^n) \diff x.
  \]
  We may now use \cref{prop:MassLump} to get
  \begin{align*}
    \| e_{m,h}^n \|_{L^1(\Omega)} &\leq  \int_\Omega \calL_h |e_{m,h}^n |\diff x
    \\
    &= \sum_{\vertex \in \VertInt(s,n) } |e_{m,h}^n(\vertex)| \int_\Omega \phi_\vertex \diff x + \sum_{\vertex \in \VertInt(b,n) } |e_{m,h}^n(\vertex)| \int_\Omega \phi_\vertex \diff x \eqqcolon \mathrm{S} + \mathrm{B}.
  \end{align*}
  For the first term a simple Cauchy-Schwarz inequality yields
  \begin{align*}
    \mathrm{S} &\leq \left( 2 \sum_{\vertex \in \VertInt(s,n)} \int_\Omega \phi_\vertex \right)^{1/2} \left( \frac12 \sum_{\vertex \in \VertInt(s,n)} |e_{m,h}^n(\vertex) |^2 \int_\Omega \phi_\vertex \right)^{1/2}
    \\
    &\leq \sqrt{2 |\Omega|} \left( \int_\Omega \calL_h \Theta_1( e_{m,h}^n) \diff x \right)^{1/2}.
  \end{align*}
  On the other hand, the bound on the second term is immediate, i.e.,
  \[
    \mathrm{B} \leq 2 \int_\Omega \calL_h \Theta_1( e_{m,h}^n) \diff x.
  \]
  We thus gather to obtain, since $n$ was arbitrary,
  \begin{multline*}
    \| e_{m,h}^\dt \|_{L^\infty(0,T;L^1(\Omega))} \leq
    \sqrt{2|\Omega|} \left( \max_{n = 1, \ldots, \calN } \int_\Omega \calL_h \Theta_1( e_{m,h}^n) \diff x \right)^{1/2} \\
    + 2\max_{n = 1, \ldots, \calN } \int_\Omega \calL_h \Theta_1( e_{m,h}^n) \diff x ,
  \end{multline*}
  which, combined with \cref{eq:AlmostLinfL1conv} finally yields
  \begin{multline}
  \label{eq:NowIsLinfL1Conv}
    \| e_{m,h}^\dt \|_{L^\infty(0,T;L^1(\Omega))} \lesssim
      \left( \| f-f_m \|_{L^1(Q_T)} +  C_1 C_\calP \| u_0 - u_{0,m} \|_{L^1(\Omega)} \right)^{1/2}
      \\
       + \| f-f_m \|_{L^1(Q_T)} + C_1 C_\calP \| u_0 - u_{0,m} \|_{L^1(\Omega)}.
  \end{multline}

  We can choose then $m_2 \geq m_1$ which will guarantee that, for $m \geq m_2$,
  \begin{multline*}
    C\left[
      \left( \| f-f_m \|_{L^1(Q_T)} +  C_1 C_\calP \| u_0 - u_{0,m} \|_{L^1(\Omega)} \right)^{1/2}
      + \| f-f_m \|_{L^1(Q_T)}
    \right.
    \\
    \left.
      +  C_1 C_\calP \| u_0 - u_{0,m} \|_{L^1(\Omega)}
      + \left(
        \| f-f_m \|_{L^1(Q_T)} + C_1 C_\calP \| u_0 - u_{0,m} \|_{L^1(\Omega)}
      \right)^{1/\barq}
    \right] < \frac\epsilon 3,
  \end{multline*}
  where $C>0$ is the constant induced by the one hidden in \eqref{eq:ConvLqW1q}.
  Therefore, \cref{eq:ConvLqW1q} and \cref{eq:NowIsLinfL1Conv} imply
  \[
    \| e_{m,h}^\dt \|_{L^\infty(0,T;L^1(\Omega))} + \| \GRAD e_{m,h}^\dt \|_{L^q(0,T;\bfL^q(\Omega))} < \frac\epsilon3.
  \]

  In conclusion, if $h$ and $\dt$ are small enough
  \[
    \| u - u_h^\dt \|_{L^\infty(0,T;L^1(\Omega))} + \| \GRAD( u - u_h^\dt ) \|_{L^q(0,T;\bfL^q(\Omega))} < \epsilon,
  \]
  and this shows convergence.
\end{proof}

\section{Conclusions,  extensions, and future research}
\label{sec:OtherStuff}

Having obtained a convergent scheme for the simplest parabolic equation possible, we briefly mention ways in which our results, without much effort, can be generalized.

\begin{itemize}[left=0pt]
  \item \textbf{Variable coefficients:} The equation in \cref{eq:TheEqn} may be generalized to
  \[
    \partial_t u -\DIV( \bfA \GRAD u )  = f.
  \]
  Here $\bfA \in L^\infty(\Omega;\Real^{d \times d})$ is symmetric, \ie $\bfA(x)^\top = \bfA(x)$ for almost every $x \in\Omega$, and there are constants $0 < \lambda \leq \Lambda$ such that
  \[
    \lambda |\bfv|^2 \leq \bfA(x)\bfv \cdot \bfv \leq \Lambda |\bfv|^2, \qquad \forall \bfv \in \Real^d, \ \mae \ x \in \Omega.
  \]
  The case $\bfA = a \bfI_d$, where $a \in L^\infty(\Omega)$, and $\bfI_d$ is the identity matrix merely requires adjusting the constants in our arguments. For the general case an analogue of \cref{assume:DMP} is needed. We refer the reader to \cite[Section 6]{MR2266830} for suitable mesh conditions.
  
  \item \textbf{Lower order terms:} Another related problem that can be tackled performing only minor variations to this approach is a parabolic reaction-diffusion equation. In fact, the mass-lumping approach described in this work can be extended to such a case, and the same results presented in the previous sections follow only after minor modifications of what was presented in this work.  A more significant modification of
  the analysis presented herein would be needed for the case a convection term is added to the formulation. In fact, in such a case the mesh requirements need to be much more strict (especially if 
  convection dominates), and the finite element method needs to be stabilized somehow.
  
  \item \textbf{Open questions:} Several problems remain open at the moment.  For example, the development of a finite element method that converges for any shape-regular family of triangulations without
  the need to modify the PDE first is an interesting, and challenging, problem.  In addition, the extension of the results presented in this work to nonlinear PDEs does not seem to be an easy task. These, and other
  problems will be the subject of future research.
\end{itemize}

\section*{Acknowledgements}

AJS has been partially supported by NSF grant DMS-2409918.
The core of this work was carried out while the authors were participating in the the Research-in-Groups programme: ``\emph{Approximation of renormalized solutions}'' at the International Centre for Mathematical Sciences, Edinburgh. ICMS support is gratefully acknowledged.

\bibliographystyle{siamplain}
\bibliography{BS}

\end{document}